\newcommand{\Mdef}[2]{\newcommand{#1}{\relax \ifmmode #2 \else $#2$\fi}}
  \definecolor{dark-red}{rgb}{0.4,0.15,0.15}
\newcommand{\bft}{\mathbf{t}}
\newcommand{\bfT}{\mathbf{T}}
\newcommand{\sm }{\wedge}
\newcommand{\tensor}{\otimes}
\newcommand{\Hom}{\mathrm{Hom}}
\newcommand{\Ext}{\mathrm{Ext}}
\Mdef{\bhom}{\mathbf{\hat{H}om}}
\Mdef{\Mod}{\mathrm{mod}}
\newcommand{\st}{\; | \;}
\newtheorem{thm}{Theorem}[section]
\newtheorem{lemma}[thm]{Lemma}
\newtheorem{prop}[thm]{Proposition}
\theoremstyle{definition}
\newtheorem{defn}[thm]{Definition}
\newtheorem{example}[thm]{Example}
\newtheorem{remark}[thm]{Remark}
\newcommand{\qqed}{\qed \\[1ex]}
\renewenvironment{proof}[1][\hspace*{-.8ex}]{\noindent {\bf Proof #1:\;}}{\qqed}
\Mdef{\PH} {\Phi^H}
\Mdef{\PK} {\Phi^K}
\Mdef{\PL} {\Phi^L}
\Mdef{\PT} {\Phi^{\T}}
\Mdef{\ef}{E{\cF}_+}
\Mdef{\etf}{\widetilde{E}{\cF}}
\Mdef{\eg}{E{G}_+}
\Mdef{\etg}{\tilde{E}{G}}
\Mdef{\infl}{\mathrm{inf}}
\Mdef{\defl}{\mathrm{def}}
\Mdef{\res}{\mathrm{res}}
\Mdef{\ind}{\mathrm{ind}}
\Mdef{\coind}{\mathrm{coind}}
\Mdef{\univ}{\mathcal{U}}
\Mdef{\Fp}{\mathbb{F}_p}
\Mdef{\Zpinfty}{\Z /p^{\infty}}
\Mdef{\Zpadic}{\Z_p^{\wedge}}
\newcommand{\bi}{\begin{itemize}}
\newcommand{\be}{\begin{enumerate}}
\newcommand{\bc}{\begin{center}}
\newcommand{\bd}{\begin{description}}
\newcommand{\ei}{\end{itemize}}
\newcommand{\ee}{\end{enumerate}}
\newcommand{\ec}{\end{center}}
\newcommand{\ed}{\end{description}}
\newcommand{\lra}{\longrightarrow}
\newcommand{\lla}{\longleftarrow}
\Mdef{\we}{\mathbf{we}}
\Mdef{\fib}{\mathbf{fib}}
\Mdef{\cof}{\mathbf{cof}}
\Mdef{\BI}{\mathcal{BI}}
\newcommand{\fibre}{\mathrm{fibre}}
\newcommand{\hocolim}{\mathop{  \mathop{\mathrm {holim}}\limits_\rightarrow} \nolimits}
\Mdef{\A}{\mathbb{A}}
\Mdef{\B}{\mathbb{B}}
\Mdef{\C}{\mathbb{C}}
\Mdef{\D}{\mathbb{D}}
\Mdef{\E}{\mathbb{E}}
\Mdef{\T}{\mathbb{T}}
\Mdef{\F}{\mathbb{F}}
\Mdef{\G}{\mathbb{G}}
\Mdef{\I}{\mathbb{I}}
\Mdef{\N}{\mathbb{N}}
\Mdef{\Q}{\mathbb{Q}}
\Mdef{\R}{\mathbb{R}}
\Mdef{\bbS}{\mathbb{S}}
\Mdef{\Z}{\mathbb{Z}}
\Mdef{\bA}{\mathbb{A}}
\Mdef{\bB}{\mathbb{B}}
\Mdef{\bC}{\mathbb{C}}
\Mdef{\bD}{\mathbb{D}}
\Mdef{\bE}{\mathbb{E}}
\Mdef{\bF}{\mathbb{F}}
\Mdef{\bG}{\mathbb{G}}
\Mdef{\bH}{\mathbb{H}}
\Mdef{\bI}{\mathbb{I}}
\Mdef{\bJ}{\mathbb{J}}
\Mdef{\bK}{\mathbb{K}}
\Mdef{\bL}{\mathbb{L}}
\Mdef{\bM}{\mathbb{M}}
\Mdef{\bN}{\mathbb{N}}
\Mdef{\bO}{\mathbb{O}}
\Mdef{\bP}{\mathbb{P}}
\Mdef{\bQ}{\mathbb{Q}}
\Mdef{\bR}{\mathbb{R}}
\Mdef{\bS}{\mathbb{S}}
\Mdef{\bT}{\mathbb{T}}
\Mdef{\bU}{\mathbb{U}}
\Mdef{\bV}{\mathbb{V}}
\Mdef{\bW}{\mathbb{W}}
\Mdef{\bX}{\mathbb{X}}
\Mdef{\bY}{\mathbb{Y}}
\Mdef{\bZ}{\mathbb{Z}}
\Mdef{\cA}{\mathcal{A}}
\Mdef{\cB}{\mathcal{B}}
\Mdef{\cC}{\mathcal{C}}
\Mdef{\mcD}{\mathcal{D}} 
\Mdef{\cE}{\mathcal{E}}
\Mdef{\cF}{\mathcal{F}}
\Mdef{\cG}{\mathcal{G}}
\Mdef{\mcH}{\mathcal{H}} 
\Mdef{\cI}{\mathcal{I}}
\Mdef{\cJ}{\mathcal{J}}
\Mdef{\cK}{\mathcal{K}}
\Mdef{\mcL}{\mathcal{L}}
\Mdef{\cM}{\mathcal{M}}
\Mdef{\cN}{\mathcal{N}}
\Mdef{\cO}{\mathcal{O}}
\Mdef{\cP}{\mathcal{P}}
\Mdef{\cQ}{\mathcal{Q}}
\Mdef{\mcR}{\mathcal{R}}
\Mdef{\cS}{\mathcal{S}}
\Mdef{\cT}{\mathcal{T}}
\Mdef{\cU}{\mathcal{U}}
\Mdef{\cV}{\mathcal{V}}
\Mdef{\cW}{\mathcal{W}}
\Mdef{\cX}{\mathcal{X}}
\Mdef{\cY}{\mathcal{Y}}
\Mdef{\cZ}{\mathcal{Z}}
\Mdef{\ca}{\mathcal{a}}
\Mdef{\ct}{\mathcal{t}}
\Mdef{\At}{\tilde{A}}
\Mdef{\Bt}{\tilde{B}}
\Mdef{\Ct}{\tilde{C}}
\Mdef{\Et}{\tilde{E}}
\Mdef{\Ht}{\tilde{H}}
\Mdef{\Kt}{\tilde{K}}
\Mdef{\Lt}{\tilde{L}}
\Mdef{\Mt}{\tilde{M}}
\Mdef{\Nt}{\tilde{N}}
\Mdef{\Pt}{\tilde{P}}
\Mdef{\tA}{\tilde{A}}
\Mdef{\tB}{\tilde{B}}
\Mdef{\tC}{\tilde{C}}
\Mdef{\tE}{\tilde{E}}
\Mdef{\tH}{\tilde{H}}
\Mdef{\tK}{\tilde{K}}
\Mdef{\tL}{\tilde{L}}
\Mdef{\tM}{\tilde{M}}
\Mdef{\tN}{\tilde{N}}
\Mdef{\tP}{\tilde{P}}
\Mdef{\ft}{\tilde{f}}
\Mdef{\xt}{\tilde{x}}
\Mdef{\yt}{\tilde{y}}
\Mdef{\Ab}{\overline{A}}
\Mdef{\Bb}{\overline{B}}
\Mdef{\Cb}{\overline{C}}
\Mdef{\Db}{\overline{D}}
\Mdef{\Eb}{\overline{E}}
\Mdef{\Fb}{\overline{F}}
\Mdef{\Gb}{\overline{G}}
\Mdef{\Hb}{\overline{H}}
\Mdef{\Ib}{\overline{I}}
\Mdef{\Jb}{\overline{J}}
\Mdef{\Kb}{\overline{K}}
\Mdef{\Lb}{\overline{L}}
\Mdef{\Mb}{\overline{M}}
\Mdef{\Nb}{\overline{N}}
\Mdef{\Ob}{\overline{O}}
\Mdef{\Pb}{\overline{P}}
\Mdef{\Qb}{\overline{Q}}
\Mdef{\Rb}{\overline{R}}
\Mdef{\Sb}{\overline{S}}
\Mdef{\Tb}{\overline{T}}
\Mdef{\Ub}{\overline{U}}
\Mdef{\Vb}{\overline{V}}
\Mdef{\Wb}{\overline{W}}
\Mdef{\Xb}{\overline{X}}
\Mdef{\Yb}{\overline{Y}}
\Mdef{\Zb}{\overline{Z}}
\Mdef{\db}{\overline{d}}
\Mdef{\hb}{\overline{h}}
\Mdef{\qb}{\overline{q}}
\Mdef{\rb}{\overline{r}}
\Mdef{\tb}{\overline{t}}
\Mdef{\ub}{\overline{u}}
\Mdef{\vb}{\overline{v}}
\Mdef{\hc}{\hat{c}}
\Mdef{\he}{\hat{e}}
\Mdef{\hf}{\hat{f}}
\Mdef{\hA}{\hat{A}}
\Mdef{\hH}{\hat{H}}
\Mdef{\hJ}{\hat{J}}
\Mdef{\hM}{\hat{M}}
\Mdef{\hP}{\hat{P}}
\Mdef{\hQ}{\hat{Q}}
\Mdef{\thetab}{\overline{\theta}}
\Mdef{\phib}{\overline{\phi}}
\Mdef{\uA}{\underline{A}}
\Mdef{\uB}{\underline{B}}
\Mdef{\uC}{\underline{C}}
\Mdef{\uD}{\underline{D}}
\Mdef{\bolda}{\mathbf{a}}
\Mdef{\boldb}{\mathbf{b}}
\Mdef{\bfD}{\mathbf{D}}
\Mdef{\fm}{\frak{m}}
\Mdef{\fp}{\frak{p}}
\Mdef{\eps}{\epsilon}
\newcommand{\cell}{\mathrm{Cell}}
\newcommand{\bbI}{\mathbb{I}}
\newcommand{\Cech}{\v{C}ech}
\newcommand{\fn}{\mathfrak{n}}
\newcommand{\Gn}{\Gamma_{\fn}}
\newcommand{\Cn}{\check{C}_{\fn}}
\newcommand{\bbF}{{\mathbb{F}}}
\newcommand{\coext} [3] {#1\!\!\Uparrow_{#2}^{#3}}
\newcommand{\Rmod}{\mbox{$R$-mod}}
\newcommand{\Abstar}{\mathrm{Ab}_*}
\newcommand{\HKt}{\widetilde{HK}}
\newcommand{\rc}{{\mathbf{r}}}
\renewcommand{\Rc}{{\mathbf{R}}}
\newcommand{\bfx}{\mathbf{x}}
\begin{document}
\title{Anderson and Gorenstein duality}
\author{J.P.C.Greenlees}
\address{School of Mathematics and Statistics, Hicks Building, 
Sheffield S3 7RH. UK.}
\email{j.greenlees@sheffield.ac.uk}
\author{V.Stojanoska}
\address{Department of Mathematics, University of Illinois, 1409 W Green St., Urbana IL 61802, USA.}
\email{vesna@illinois.edu}
\date{}

\begin{abstract}
The paper relates the Gorenstein duality
statements of \cite{DGI, DGI3} to the Anderson duality statements of 
\cite{Stojanoska1, Stojanoska2}, and explains how to use  
local cohomology and invariant theory to understand the numerology of 
shifts in simple cases.
\end{abstract}

\thanks{We are grateful to MSRI and MPI for giving us the opportunity to
  start these discussions,  to the referee for careful reading and
  detailed comments, to J.Rognes for an email conversation suggesting the connection
  described in Subsection \ref{subsec:MR}, and to C.Rezk about further discussion regarding that connection. The second author thanks the NSF for support through grant DMS-1606479.}
\maketitle

\tableofcontents
\section{Introduction}

\subsection{Motivation}
This paper emerged from a desire to understand the relationship
between the duality statements that the two authors had been working
on.  More precisely, we wished to relate the Gorenstein duality
statements of \cite{DGI, DGI3} to the Anderson duality statements of 
\cite{Stojanoska1, Stojanoska2}.  It was clear they were closely
related, but here we make the relationship precise.

One of us had been considering connective ring spectra $\rc$ (such as
$ku$ or $tmf_1(n)$) and proving when they have Gorenstein duality, and
one of us had been considering non-connective spectra $\Rc$ 
(such as $KU$ or $Tmf_1(n)$) and proving when they are Anderson
self-dual. In many cases of interest, it is easy to recover $\rc$ as
the connective cover of $\Rc$, 
but also in favourable cases $\Rc$ can be recovered from $\rc$ by a well known 
localization process, and under these processes the dualities
correspond
(Proposition \ref{prop:GorDAndD} and Lemma \ref{lem:AndDGorD}).

In many cases $\rc$ can  immediately be seen to have Gorenstein 
duality since  the coefficient ring $\rc_*$ has it. Similar reasoning on the level of coefficients then gives that
$\Rc$ is Anderson self-dual. 

 Furthermore in many cases there is a  Galois-like action of a finite
 group  $G$ on $\rc$  and on $\Rc$, which is compatible with the process
of moving from $\rc$ to $\Rc$ and back again.  (In the
above cases $G$ is $C_2$ or $(\Z/n)^{\times}$). In favourable  cases the fixed point ring spectra $\rc^G$ are of interest
($ko$ or $tmf_0(n)$). Furthermore, the action of 
$G$ on $\Rc$ is Galois with fixed point spectrum equal to the
homotopy fixed point spectrum, and $\Rc^G\simeq \Rc^{hG}$ is  also 
of interest ($KO$ or $Tmf_0(n)$). It may happen that the Gorenstein
duality of $\rc$ descends to that of $\rc^G$, or that the Anderson self-dualiy  of $\Rc$ descends to that of $\Rc^G$, but even when this happens
the shift will change. 

The simplest case is when the group order is invertible, so that the
coefficients of the homotopy fixed points are the invariants: 
$\Rc^{hG}_*=( \Rc_*)^G$, and we point out here that in this case character
theory often predicts the change in shift. 

In general these examples come in fours: $\rc, \Rc, \rc^G$ and
$\Rc^G$. One may hope to prove duality (in cases where it holds) by
the following route:
(1) we have duality  for $\rc_*$ and hence for $\rc$ (2) we infer duality
for $\Rc$ (3) we obtain duality for $\Rc^G=\Rc^{hG}$ by descent and (4) we
infer duality for $\rc^G$. The contents of this paper deal with the
step from  (1) to (2) and from  (3) to (4). The step from (2) to (3)
is more subtle and more interesting, and we hope to return to it
elsewhere. The interested reader can find specific examples of this step in \cite{HS,HillMeier,Stojanoska2}; related is the step (1) to (4), worked out in specific examples in \cite{GM}.

Beyond $K$-theory, our examples come from derived algebraic
geometry. In this setting, it is the spectra $\Rc$, rather than the
connective $\rc$, which are primordial. In the presence of a gap in
the homotopy groups of $\Rc$, one gets $\rc$ as the connective
cover. Unfortunately, there is no known procedure for obtaining $\rc$
from $\Rc $ in wide generality, other than the ad hoc strategies that
Hill-Lawson \cite{HLShimura} and Lawson \cite{LawsonD15} have
employed. One could dream of an approach to connective covers which
integrates duality: assuming that $\Rc$ is Anderson self-dual, without
necessarily a gap in its homotopy, somehow peel off a connective piece
from its coconnective dual, but for the present this is only a fantasy.

\subsection{Description of contents }
We start by giving an account of Anderson duality. The main point of
this is to explain its limitations and to make explicit the way this works under change of ring
spectra. 

We recall the definition of Gorenstein ring spectra and Gorenstein
duality. The  Gorenstein condition only makes sense when we have
a counterpart of a  residue field. However Gorenstein duality makes
sense more generally.   Under
orientability hypotheses the Gorenstein condition implies Gorenstein 
duality.  

It is then straightforward to compare Gorenstein and Anderson duality,
and we illustrate the usefulness of this in a number of cases. 

Finally we finish by describing how to use Molien's theorem to predict
the change of shift under passage to invariants.\footnote{It is
  characteristic that Dave Benson not only wrote the book \cite{Benson} from which
  we learnt this result but also illustrated it for us with numerous
  examples. We are grateful to him for his exposition, his vast range
  of interesting examples, and the delightful process of explanation. }

\subsection{Conventions}
We work in the homotopy category of modules over a ring
spectrum. However we also need to know that there is a ring
spectrum of endomorphisms of a module spectrum. For definiteness, we
work with EKMM-spectra \cite{EKMM}, but our results
are not sensitive to models, so apply in other contexts with a
homotopically meaningful symmetric
monoidal smash product and internal Hom spectra. 
 
Given a spectrum $X$, we write $\pi_*X=X_*$ for its coefficients, and
we note that  if $M$ is an $R$-module 
$$\pi_*(M)=[\bbS , M]_*=[R, M]^R_*, $$
where $\bbS$ is the sphere spectrum, and the superscript $R$ refers to
working in the category of $R$-modules.

The basic context is that we are  given a connective commutative ring
spectrum. It is convenient to use the traditional convention of using
lower case for connective covers, so we write 
$\rc$ for the ring spectrum and   $K=\pi_0(\rc)$. By killing homotopy
groups in commutative ring spectra, we have  a map $\eps: \rc\lra HK$ of
commutative ring spectra. In our main examples $K$ will be an $\bbF_p$ or a localization of $\Z$.

\section{Anderson duals}
The construction of Anderson duals is a two step process. For
injective modules we apply Brown representability (to get the
so-called Brown-Comenetz duals) and then we use cofibre sequences
to obtain Anderson duals for modules of injective dimension 1. Since
we are usually working over a field or a localization of a discrete
valuation ring this covers many cases of interest. 
Unfortunately, the construction cannot be much generalized (at injective
dimension 2 choice is involved, and at higher dimension the
construction  is often obstructed).

\subsection{Construction of Brown-Comenetz duals}
The basis for Anderson duality is that we can uniquely lift injective
coefficient modules to module spectra. In general, we are in a situation where data as below is given.

\begin{itemize}
\item  We have maps of commutative ring spectra
$$\bbS \lra  S \lra R. $$

Often we will take $S$ to equal the sphere spectrum $\bbS  $ or $R$
itself, but it is useful to retain some flexibility.

\item Additionally, we have a map of graded rings
$$A_*\lra R_*.$$

There is no requirement  that $A_*\lra R_*$ is
induced by a map of ring spectra.  For example, we always have the unit map $A_* = \Z \lra R_*$ (in degree zero), and this is what plays a role in classical Brown-Comenetz duality \cite{BrownComenetz}.

The most common and important instance of the above occurs by taking
$K=\pi_0(R)$, and declaring $A_*=K$ in degree zero, i.e.  we consider
the map
$$K \lra R_*.$$
\end{itemize}

The construction is that we take an injective $A_*$-module $J$ and
consider the   functor
$$\xymatrix{
\Rmod \ar[r] &\Abstar\\
X\ar@{|->}[r] & \Hom_{A_*}(\pi_*(X), J).
}$$

Since $J$ is injective, this is a cohomology theory, and by Brown representability there is an $R$-module $J^R=J_{A_*}^R$ so that
$$[X, J^R]^R_*=\Hom_{A_*}(\pi_*(X), J). $$
Slightly more generally, for an $R$-module $M$ we may define $J^M=J^M_{A_*}$ by
the equation 
$$[X, J^M]^R_*=\Hom_{A_*}(\pi_*(X\tensor_R M), J). $$
One quickly checks that 
$$J^M\simeq \Hom_R(M,J^R), $$
and we say $J^M$ is the Brown-Comenetz $J$-dual of $M$. 
 Of course, $J^R$ is itself the Brown-Comenetz $J$-dual of $R$, and this is the case
we will use the most. 

\subsection{Properties of Brown-Comenetz duals}
We highlight four properties of the Brown-Comenetz dual.

(P0) {\em (Homotopy groups)}  By construction, 
$$\pi_*(J_{A_*}^M)=\Hom_{A_*}(\pi_*(M), J). $$

(P1) {\em (Eilenberg-MacLane spectra)} If $R=HK$ is an
Eilenberg-MacLane spectrum and $J$ is an ungraded
injective $K$-module, then
$$J_K^{HK}=HJ. $$

Given a ring map $S\lra R$ and an $S$-module $N$ let us write 
$$\coext{N}{S}{R}=\Hom_S(R, N)$$
for the coextended module. 

(P2) {\em (Coextension of scalars I)} Given $S\lra R$ and $A_*\lra S_*\lra R_*$ we have
$$J_{A_*}^R=\coext{(J_{A_*}^S)}{S}{R}.$$
More generally, if $N$ is an $S$-module, we have 
$$J_{A_*}^{R\tensor_SN}=\coext{(J_{A_*}^N)}{S}{R}.$$
\begin{proof}
For an $R$-module $X$ we have 
$$[X, J^R_{A_*}]^R=\Hom_{A_*}(\pi_*X, J) =[X, J^S_{A_*}]^S =[X,
\coext{(J^S_{A_*})}{S}{R}]^R. $$
\end{proof}

(P3) {\em (Coextension of scalars II)} Given $A_*\lra R_*$ we note
that $\coext{J}{A_*}{R_*}$ is injective and then we have
$$(\coext{J}{A_*}{R_*})_{R_*}^R \simeq J_{A_*}^{R}. $$
\begin{proof}
For an $R$-module $X$ we have 
$$[X, (\coext{J}{A_*}{R_*})_{R_*}^R]^R=
\Hom_{R_*}(\pi_*X, 
\coext{J}{A_*}{R_*}) =\Hom_{A_*}(\pi_*X, J) 
=[X, J^R_{A_*}]^R. $$
\end{proof}

\begin{remark}
 Since coextension is a well-known construction, Property (P2) means
 that  we only ever need the special case of the construction going
from modules over coefficients $A_*$ to modules over an initial ring
spectrum to whose coefficients $A_*$ maps. Property (P3) means that  we
only ever need the special case of the Anderson construction going
from modules over coefficients to modules over the ring spectrum. 
\end{remark}

\subsection{Construction of Anderson duals}
\label{subsec:ConstAnderson}
Now we suppose given an $A_*$-module $L$  of
injective dimension 1 with chosen resolution 
$$0\lra L \lra J_0\lra
J_1\lra 0. $$
We then define  $L^R_{A_*}$ by the fibre sequence 
$$ L_{A_*}^R
\lra (J_0)_{A_*}^R \lra (J_1)_{A_*}^R. $$
We note that the maps are determined by the defining properties and
the original resolution, and it is not hard to show the spectrum  is
independent of the resolution. The classical example \cite{A1} is that
of $L = \Z = A_*$.

As for the Brown-Comenetz case, we may also define the Anderson $L$-dual
of an $R$-module $M$, either by replacing $R$ by $M$ in the above
construction, or directly by taking $L_{A_*}^M=\Hom_R(M,
L_{A_*}^R)$. Again, $L_{A_*}^R$ is itself the Anderson $L$-dual of $R$.

\subsection{Properties of Anderson duals}
\label{subsec:PropAnderson}
The properties of the Anderson dual then follow from those of the
Brown-Comenetz dual.  We suppose that $L$ is an $A_*$-module of
injective dimension $\leq 1$. 

(P0) {\em (Homotopy groups)} There is a natural  exact sequence
$$0\lra \Ext_{A_*}^1(\Sigma \pi_*(R),L) \lra \pi_*(L_{A_*}^R)\lra \Hom_{A_*}(\pi_*(R),
L)\lra 0, $$
and more generally one which computes the homotopy groups of  the dual
of any $R$-module $M$, using $[M,L_{A_*}^R]^R_*=[R, L_{A_*}^M]^R_*$:
$$ 0\lra \Ext_{A_*}^1(\Sigma \pi_*(M),L) \lra [M, L_{A_*}^R]^R_* \lra \Hom_{A_*}(\pi_*(M),
L)\lra 0 .$$

(P1) {\em (Eilenberg-MacLane spectra}) If $R=HK$ is an
Eilenberg-MacLane spectrum and $L$ is an ungraded $K$-module then 
$$L_K^{HK}\simeq HL. $$

(P2) {\em (Coextension of scalars I)} Given $S\lra R$ and $A_*\lra S_*\lra R_*$,  we have
$$L_{A_*}^R\simeq\coext{(L_{A_*}^S)}{S}{R}. $$
More generally, for an $S$-module $N$ we have
$$L_{A_*}^{R\tensor_S N}\simeq\coext{(L_{A_*}^N)}{S}{R}. $$

The main case of interest is that if
$A_*=\Z$ we need only use the classical Anderson dual of the
sphere:
$$\Z^R\simeq \coext{(I_{\Z})}{\bbS}{R}$$
where $I_{\Z}=\Z^{\bbS}$ is the usual Brown-Comenetz dual of the
sphere. Similar comments apply to localizations of $\Z$. 

(P3) {\em (Coextension of scalars II)} Given $A_*\lra R_*$ we note
that we may coextend  the resolution of $L$ to show
$\coext{L}{A_*}{R_*}$ is of injective dimension $\leq 1$ and then we 
have 
$$(\coext{L}{A_*}{R_*})_{R_*}^R \simeq L_{A_*}^R.$$

\section{The Gorenstein condition}
\label{sec:Gor}

We recall the basic language and results of Gorenstein ring spectra
from \cite{DGI}. Because of our applications, we will work with a
map $\rc\lra HK$, and we assume $\rc$ is connective and denote
$K=\pi_0(\rc)$. 

\subsection{Cellularization}
An $\rc$-module $X$ is said to be {\em $HK$-cellular} if it
 is in the localizing subcategory of $HK$ (i.e. it is  constructed from $HK$ using triangles and coproducts). 
An $HK$-cellularization of an $\rc$-module $M$ is a map $X\lra M$ so 
that $X$ is  $HK$-cellular and the map is an $\Hom_{\rc}(HK,
\cdot)$-equivalence. The $HK$-cellularization is unique up to
equivalence of $\rc$-modules and we write $\cell_{HK}M$ for it.

\subsection{Morita theory}
\label{subsec:Morita}
 We say that the $HK$-cellularization of an $\rc$-module $M$ is 
{\em effectively constructible} if the natural evaluation map 
$$\Hom_\rc(HK, M)\tensor_{\cE}HK \lra M$$
is $HK$-cellularization, where $\cE =\Hom_\rc(HK,HK)$. 

We recall that $HK$ is {\em proxy-small} if $HK$ finitely builds a small
object $\HKt$ which generates the same localizing subcategory of $R$-modules. 
The proxy-smallness condition is very mild, but in most of our applications
here we  will be in the situation that $HK$ is actually small so that we may take $\HKt=HK$.

The fact \cite[4.9]{DGI} we use is that if $HK$ is proxy-small as an $\rc$-module, then every $\rc$-module has an
effectively constructible $HK$-cellularization. 

\subsection{The Gorenstein condition}
The basic definition was given for ring spectra in \cite{DGI}. 

\begin{defn}
We say that  $\rc\lra HK$ is {\em Gorenstein of shift $a$} if $\Hom_\rc(HK,\rc)\simeq
\Sigma^aHK$. 
\end{defn}

If we suppose $\rc\lra HK$ is Gorenstein of shift $a$, we may wonder how this compares to other
modules  $\bbI$ lifting $HK$ in the sense that $\Hom_{\rc}(HK,
\Sigma^a \bbI) \simeq \Sigma^aHK$.  For
example the Anderson dual  spectrum $\bbI=K_K^{\rc}=:K^{\rc}$ as in
Subsections \ref{subsec:ConstAnderson} and  \ref{subsec:PropAnderson} qualifies as the `trivial' lift,
and in Section \ref{sec:GorDAndD} and beyond, we will restrict
attention to that case. For now just assume that $\bbI$ is an
$HK$-cellular $\rc$-module with the required lifting property, and note that the notions of Gorenstein orientability and duality below implicitly depend on $\bbI$.

If $\rc\lra HK$ is Gorenstein,  we have  
$$\Hom_\rc(HK,\rc) \simeq \Sigma^aHK \simeq \Hom_\rc(HK, \Sigma^a\bbI). $$
We note that the ring spectrum $\cE =\Hom_\rc(HK,HK)$ acts on the right of
both of these modules. 
 
\begin{defn}
We say that $\rc$ is {\em orientably} Gorenstein if  the equivalence
$$\Hom_\rc(HK,\rc)\simeq \Hom_\rc(HK,\Sigma^a \bbI)$$
is an equivalence of right $\cE$-modules. 
\end{defn}

\subsection{Gorenstein duality}
If $\rc\lra HK$ is orientably Gorenstein and $HK$ is proxy-small, we
may apply the equivalence from Morita theory 
(Subsection \ref{subsec:Morita}) to deduce
$$\cell_{HK}\rc\simeq \Sigma^a \cell_{HK}\bbI .$$

For example the Anderson dual $\bbI=K^{\rc}$ is bounded above, with
each homotopy group a $K$-module, and  hence it is already $HK$-cellular,
so that  $\cell_{HK}K^{\rc}=K^{\rc}$. The above condition translates to an equivalence
$$\cell_{HK}\rc \simeq \Sigma^a K^{\rc}.$$

\begin{defn}
We say that $\rc\lra HK$ has {\em torsion Gorenstein duality} of
shift $a$ if 
$$\cell_{HK}\rc \simeq \Sigma^a \bbI.$$
\end{defn}

Rather often this is used by completing both sides, which is to say
applying the functor
$$(\cdot)_{HK}^{\wedge}=\Hom_\rc(\cell_{HK} \rc, \cdot). $$

\begin{defn}
We say that $\rc\lra HK$ has {\em complete Gorenstein duality}
of shift $a$ if 
$$\rc_{HK}^{\wedge}\simeq \Sigma^a \bbI_{HK}^{\wedge}. $$
\end{defn}

\begin{remark}
\label{rem:conncomp}
Since $\rc$ is connective it is the inverse limit of its Postnikov
sections and hence $HK$-complete, i.e. $\rc_{HK}^{\wedge}\simeq \rc$ and the
condition simplifies to the statement 
$$\rc \simeq \Sigma^a \bbI_{HK}^{\wedge}. $$
\end{remark}

In fact the two Gorenstein duality conditions are equivalent, so that
when no emphasis is necessary we refer simply to `Gorenstein duality'.

\begin{lemma}
The torsion and complete Gorenstein duality statements are
equivalent. 
\end{lemma}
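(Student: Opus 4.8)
The plan is to show each of the two Gorenstein duality statements is obtained from the other by applying the completion functor $(-)_{HK}^{\wedge} = \Hom_\rc(\cell_{HK}\rc, -)$, and then to check that this functor does no damage when applied to the spectra in play. Concretely, suppose first that $\rc \lra HK$ has torsion Gorenstein duality of shift $a$, so $\cell_{HK}\rc \simeq \Sigma^a \bbI$. Applying $(-)_{HK}^{\wedge}$ to both sides immediately gives $(\cell_{HK}\rc)_{HK}^{\wedge} \simeq \Sigma^a \bbI_{HK}^{\wedge}$. So the first thing to record is the elementary fact that $(\cell_{HK}\rc)_{HK}^{\wedge} \simeq \rc_{HK}^{\wedge}$: the cellularization map $\cell_{HK}\rc \lra \rc$ is by definition an $\Hom_\rc(HK,-)$-equivalence, hence an $\Hom_\rc(\cell_{HK}\rc,-)$-equivalence (since $\cell_{HK}\rc$ lies in the localizing subcategory generated by $HK$), so applying $\Hom_\rc(\cell_{HK}\rc,-)$ to it yields an equivalence. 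Combining, $\rc_{HK}^{\wedge} \simeq \Sigma^a \bbI_{HK}^{\wedge}$, which is exactly complete Gorenstein duality.

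For the converse I would run the completion-then-cellularize comparison the other way. Starting from $\rc_{HK}^{\wedge} \simeq \Sigma^a \bbI_{HK}^{\wedge}$, apply $\cell_{HK}(-)$ to both sides. The key input here is that for any $\rc$-module $M$ the natural map $\cell_{HK}M \lra \cell_{HK}(M_{HK}^{\wedge})$ is an equivalence — equivalently, that the completion map $M \lra M_{HK}^{\wedge}$ is an $\Hom_\rc(HK,-)$-equivalence. This is the standard interplay between $HK$-cellularization and $HK$-completion (the two are part of a "local duality" adjunction in the sense of \cite{DGI}): the fibre of $M \lra M_{HK}^{\wedge}$ is $HK$-acyclic in the relevant sense. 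Granting this, we get $\cell_{HK}\rc \simeq \cell_{HK}(\rc_{HK}^{\wedge}) \simeq \Sigma^a \cell_{HK}(\bbI_{HK}^{\wedge}) \simeq \Sigma^a \cell_{HK}\bbI$. Finally, since $\bbI$ is assumed $HK$-cellular we have $\cell_{HK}\bbI \simeq \bbI$, and we conclude $\cell_{HK}\rc \simeq \Sigma^a \bbI$, i.e. torsion Gorenstein duality.

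I expect the only real content — and hence the step to state carefully — is the pair of "completion and cellularization are mutually inverse up to the relevant equivalences" assertions: that $\cell_{HK}M \lra M$ is an $\Hom_\rc(\cell_{HK}\rc,-)$-equivalence and that $M \lra M_{HK}^{\wedge}$ is an $\Hom_\rc(HK,-)$-equivalence. Both are formal consequences of proxy-smallness of $HK$ together with the definitions of cellularization and completion, and are exactly the local-duality formalism recalled from \cite{DGI} (the functors $\cell_{HK}$ and $(-)_{HK}^{\wedge}$ become inverse equivalences between the $HK$-torsion and $HK$-complete subcategories). Everything else is bookkeeping: the shift $\Sigma^a$ passes through both functors since they are exact, and the two uses of "$\bbI$ is $HK$-cellular" and "$\cell_{HK}\rc \lra \rc$ is an appropriate equivalence" are precisely the observations already made in the text preceding the lemma. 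So the proof is short: torsion duality $\xRightarrow{\ (-)_{HK}^{\wedge}\ }$ complete duality using $(\cell_{HK}\rc)_{HK}^{\wedge}\simeq\rc_{HK}^{\wedge}$, and complete duality $\xRightarrow{\ \cell_{HK}(-)\ }$ torsion duality using $\cell_{HK}(M_{HK}^{\wedge})\simeq\cell_{HK}M$ and $\cell_{HK}\bbI\simeq\bbI$.
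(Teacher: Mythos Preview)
Your proposal is correct and follows essentially the same approach as the paper: torsion $\Rightarrow$ complete by applying $(-)_{HK}^{\wedge}$ and using $(\cell_{HK}\rc)_{HK}^{\wedge}\simeq\rc_{HK}^{\wedge}$, and complete $\Rightarrow$ torsion by applying $\cell_{HK}$ and using $\cell_{HK}(M_{HK}^{\wedge})\simeq\cell_{HK}M$ together with $\cell_{HK}\bbI\simeq\bbI$. The only difference is that where you defer the key fact that completion is an $\Hom_\rc(HK,-)$-equivalence to the local-duality formalism of \cite{DGI}, the paper gives a short direct argument via the adjunction $\Hom_\rc(HK, M_{HK}^{\wedge})\simeq\Hom_\rc(HK\tensor_\rc\cell_{HK}\rc, M)$ and the observation $HK\tensor_\rc\cell_{HK}\rc\simeq HK$.
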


\begin{proof}
Since the map $\cell_{HK}\rc\lra \rc$ is an $HK$-cellular equivalence, it is
clear that the torsion duality implies complete duality by taking
completions, since
$$\Hom_\rc(\cell_{HK} \rc, \cell_{HK} \rc) \simeq \Hom_\rc(\cell_{HK} \rc,  \rc) \simeq \rc_{HK}^{\wedge}.$$

To recover the torsion duality from complete duality, we use
$HK$-cellularizations as follows.

In fact completion is a cellular equivalence
rather generally.  We consider the completion map 
$$M=\Hom_\rc(\rc, M) \lra \Hom_\rc(\cell_{HK} \rc, M)$$
and apply $\Hom_\rc(HK, \cdot)$ to get
$$\Hom_\rc(HK\tensor_\rc \rc, M) \lra \Hom_\rc(HK\tensor_\rc \cell_{HK} \rc, M).  $$
We observe this is an equivalence; indeed, since $HK$ is
$HK$-cellular and $HK$-cellularization is smashing, the map 
$$HK\tensor_\rc \cell_{HK} \rc\lra HK \tensor_\rc \rc$$
is an equivalence. Thus 
$$\cell_{HK} M\simeq \Hom_\rc(\cell_{HK}\rc,M)\tensor_{\rc} \cell_{HK}\rc\simeq \cell_{HK}(M_{HK}^{\wedge})$$
as required.
\end{proof}

\subsection{Gorenstein duality relative to $\Fp$}
We  consider the statement of Gorenstein duality for $\rc\lra 
H\Fp$ when $K=\pi_0(\rc) \cong \Z$ (or equally when $K=\Z_{(p)}, \Z_p^{\wedge}$),  referring to the discussion in the previous subsection  for comparison. 

As before we start by assuming   $\rc\lra H\Fp$ is Gorenstein of shift $a$, and note that this gives an equivalence 
$$\Hom_\rc(H\Fp, \rc) \simeq \Sigma^{a}H\Fp \simeq \Hom_\rc(H\Fp, \Sigma^{a+1} K^\rc). $$
The difference from the case relative to $HK$ is that 
$$\cell_{H\Fp}(K^\rc) \simeq \Sigma^{-1}(\Z/p^{\infty})^\rc.  $$
The appropriate definition is then clear. 

\begin{defn}
When $K=\pi_0(\rc)=\Z, \Z_{(p)}, \Z_p^{\wedge}$, we say that $\rc\lra H\Fp$ has Gorenstein duality of shift $a$ if 
$$\cell_{H\Fp}\rc\simeq \Sigma^a (\Z/p^{\infty})^\rc. $$
\end{defn}
As before, if  $\rc\lra H\Fp$ is proxy regular and there is a unique
action of $\Hom_\rc(\Fp, \Fp)$ on $\Fp$, 
then Gorenstein implies Gorenstein duality. 

In the context where both make sense, we show in the next subsection
that this Gorenstein duality is equivalent to the duality of Mahowald-Rezk \cite{MahowaldRezkdual}.

\subsection{Mahowald-Rezk duality}
\label{subsec:MR}
\newcommand{\type}{\mathrm{type}}
\newcommand{\fptype}{\mathrm{fp\!\!-\!\!type}}

Mahowald and Rezk \cite{MahowaldRezkdual} consider the class of  {\em fp-spectra} (connective, $p$-complete and whose mod $p$ 
homology is a finitely presented comodule over the Steenrod 
algebra). The {\em type} of a $p$-local finite complex $F$ is defined by
$$\type(F)=\min\{n \st K(n)_*F\neq 0\}, $$
where $K(n)$ is the $n$th Morava $K$-theory at $p$. 
 The  {\em fp-type} of an   fp-spectrum $X$ is defined by 
$$\fptype(X)=\min \{\type(F)-1 \st F \mbox{ is a finite complex and
} \pi_*(X\sm F) \mbox{ is a finite group }\}.$$
 For example,
$ko$ and $ku$ are  fp-spectra  of fp-type 1, and $tmf$ is an fp-spectrum of fp-type 2. 

 If $\rc$ is a ring spectrum of fp-type $n$, such that its mod-p homology is self-dual in a suitable sense, then Mahowald and Rezk show that there is a duality equivalence
$$(\Z/p^{\infty})^{C_n^f\rc}\simeq \Sigma^c\rc .$$
This is satisfied in a number of cases, including $ko$, $ku$, and
$tmf$ \cite[Proposition 9.2, Corollary 9.3]{MahowaldRezkdual}.
Here  $C_n^f$ is the $n$th finite chromatic cellularization (i.e., the cellularization with
respect to a finite type $n+1$ complex $F$).  A more specific 
construction proceeds by constructing  a cofinal inverse system  of
generalized Moore spectra $S^0/I=
S^0/v_0^{i_0}, v_1^{i_1}, \ldots , v_n^{i_n}$ and then taking 
$$C_n^fX=\hocolim_I F(S^0/I, X). $$

\newcommand{\cellFpmod}{\cell_{H\Fp}(\mbox{$\rc$-mod})}
\newcommand{\cellFspectra}{\cell_{F}(\mbox{Spectra})}
\newcommand{\cellFR}{\cell_{F\wedge \rc}}

\begin{lemma}
If $\rc$ is an fp-spectrum of fp-type $n$ then there is a
natural equivalence $C_n^fM\simeq \cell_{H\Fp}M$ for $\rc$-modules $M$.
\end{lemma}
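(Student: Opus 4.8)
The plan is to identify the natural map $C_n^f M\to M$ with the $H\Fp$-cellularization map of $M$: by the uniqueness and functoriality of $H\Fp$-cellularization recorded above, it suffices to check that $C_n^f M$ lies in $\loc_{\rc\text{-mod}}(H\Fp)$ and that $C_n^f M\to M$ is a $\Hom_\rc(H\Fp,-)$-equivalence. One preliminary observation makes this tractable: writing $D(S^0/I)=F(S^0/I,\bbS)$ for the Spanier--Whitehead dual, finiteness of $S^0/I$ gives $F(S^0/I,M)\simeq (\rc\wedge D(S^0/I))\tensor_\rc M$, so that $C_n^f M\simeq (C_n^f\rc)\tensor_\rc M$ and, likewise, the fibre sequence $C_n^f M\to M\to L_n^f M$ is obtained from the one for the sphere by smashing with $M$; in particular $C_n^f$ is smashing over $\rc$ and $C_n^f\rc\simeq\hocolim_I \rc\wedge D(S^0/I)$.

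For the first property I would first show $C_n^f\rc\in\loc_{\rc\text{-mod}}(H\Fp)$; since $\loc_{\rc\text{-mod}}(H\Fp)$ is a localizing ideal, this forces $C_n^f M=(C_n^f\rc)\tensor_\rc M$ into it for every $M$. This is the only place the fp-type hypothesis enters. Because $\rc$ has fp-type $n$, some finite type $n+1$ complex has finite homotopy after smashing with $\rc$; as the condition ``$\pi_*(\rc\wedge -)$ is finite'' is preserved by cofibre sequences, retracts and (de)suspension, the thick subcategory theorem shows the same holds for every finite complex of type $\ge n+1$, in particular for each $D(S^0/I)$. Since $\rc$ is connective and $D(S^0/I)$ is bounded, $\rc\wedge D(S^0/I)$ is a bounded $\rc$-module whose homotopy groups are finite $p$-groups, so it has a finite Postnikov tower in $\rc$-modules whose layers are Eilenberg--MacLane $\rc$-modules on finite $p$-groups; each such layer is finitely built from $H\Fp$ (using that $\pi_0(\rc)$ surjects onto $\Fp$), whence $\rc\wedge D(S^0/I)\in\thick_{\rc\text{-mod}}(H\Fp)$. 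Passing to the homotopy colimit over $I$ gives $C_n^f\rc\in\loc_{\rc\text{-mod}}(H\Fp)$.

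For the second property, applying $\Hom_\rc(H\Fp,-)$ to $C_n^f M\to M\to L_n^f M$ reduces the claim to $\Hom_\rc(H\Fp,L_n^f M)=0$. Since $L_n^f$ is smashing, $L_n^f M$ is $L_n^f$-local in $\rc$-modules, so by orthogonality of acyclics and locals it is enough that the $\rc$-module $H\Fp$ — equivalently the spectrum $H\Fp$ — be $L_n^f$-acyclic. To see this, choose a finite complex $F$ of type $n+1$ with bottom cell in dimension $0$ (for instance a generalized Moore spectrum $S^0/I$). Then $F\wedge H\Fp$ is an $H\Fp$-module spectrum, hence a wedge of suspensions of $H\Fp$ indexed by $H_*(F;\Fp)$, and the bottom cell $S^0\hookrightarrow F$ exhibits $H\Fp$ as a retract of $F\wedge H\Fp$. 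But $F$, being of type $n+1$, is $L_n^f$-acyclic, and the $L_n^f$-acyclics form a localizing ideal; so $F\wedge H\Fp$, and therefore its retract $H\Fp$, are $L_n^f$-acyclic, as needed.

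I expect the main obstacle to be the first step: converting the fp-type hypothesis — a finiteness statement about the homotopy of $\rc$ smashed with finite spectra — into the statement that $C_n^f\rc$ is built from $H\Fp$ \emph{in the category of $\rc$-modules}. The finite Postnikov tower of $\rc\wedge D(S^0/I)$ is the natural bridge, but one must be careful about its boundedness and about identifying its Eilenberg--MacLane layers. By contrast, the $L_n^f$-acyclicity of $H\Fp$ is the soft retract argument above, and the reduction to the two cellularization properties is formal.
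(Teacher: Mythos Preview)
Your argument is correct and complete, but the route differs from the paper's in an instructive way.

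The paper factors the comparison through an intermediate cellularization: it first identifies $C_n^f M$ with $\cell_{F\wedge\rc}M$ in $\rc$-modules (for $F$ any finite type $n{+}1$ complex), and then proves the equality of localizing subcategories $\langle F\wedge\rc\rangle=\langle H\Fp\rangle$. The key input for the latter is \cite[Proposition~3.2]{MahowaldRezkdual}, which says that $F\wedge\rc$ is a \emph{finite wedge of copies of $H\Fp$}; from this the paper argues in both directions by hand (an induction on the dimension of $\pi_*$ for one inclusion, and an explicit resolution of $H\Fp$ by copies of $F\wedge\rc$ for the other).

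You instead verify the two defining properties of $\cell_{H\Fp}$ directly. For the first you replace the appeal to Mahowald--Rezk's structural result by the thick subcategory theorem: finiteness of $\pi_*(\rc\wedge F)$ for one type $n{+}1$ complex propagates to all of them, in particular to each $D(S^0/I)$, and then a Postnikov argument lands you in $\thick_\rc(H\Fp)$. For the second you give a slick acyclicity argument: $H\Fp$ is a retract of $F\wedge H\Fp$, hence $L_n^f$-acyclic. Your reduction to $M=\rc$ via $C_n^fM\simeq(C_n^f\rc)\tensor_\rc M$ is clean, and your observation that $\loc_\rc(H\Fp)$ is a tensor ideal (because $H\Fp\tensor_\rc M$ is an $H\Fp$-module, hence a wedge of $H\Fp$'s) is exactly what makes that reduction work.

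In short: the paper's proof is closer to the fp-spectrum machinery and uses the specific Mahowald--Rezk splitting as its fulcrum; yours trades that for the thick subcategory theorem, which is a heavier hammer but makes the argument more modular. Your treatment of the second property is arguably tidier than the paper's explicit building of $H\Fp$ from $F\wedge\rc$. One small point of care: in your Postnikov step you need each finite $p$-group $\pi_0(\rc)$-module to admit a composition series with subquotients $\Fp$; this is automatic when $\pi_0(\rc)$ is a complete local ring with residue field $\Fp$ (as in all the examples at hand), and the paper's inductive argument tacitly uses the same hypothesis.
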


\begin{proof}
The proof consists of two steps: identify $C_n^f M$ with the cellularisation in $\rc$-modules $\cellFR$, and then show that the localising subcategories $\langle H\Fp \rangle$ and $\langle F\wedge \rc \rangle$ of $\rc$-modules, generated by $H\Fp$ and $F\wedge \rc$, respectively, are equal.

For the first step, we check that $\cellFR M$ has the required universal property. Let $M[1/F\wedge \rc]$ denote the cofibre of the natural map $\cellFR M \to M$; then spectrum maps from $F$ to $M[1/F\wedge \rc]$ are the same thing as $\rc$-module maps from $F\wedge \rc $ to it, but by construction those are all null. Next, we need to know that the spectrum underlying $\cellFR M $ is in the localising subcategory of spectra generated by $F$. Since colimits commute with smash product, this follows since the $\rc$-module $\cellFR M $ is in the localising subcategory of $\rc$-modules generated by $F \wedge \rc$.

For the second step, the key property is that $F \wedge \rc $ is a
finite wedge of copies of $H\Fp$ by  \cite[Proposition
3.2]{MahowaldRezkdual}. Hence, $F \wedge \rc$ is in the localising
subcategory $\langle H\Fp \rangle$ (argue by induction that if
$\pi_*(M)$ is a finite dimensional vector space it is finitely built
by $H\Fp$; for the inductive step, if $M$ has bottom homotopy in
degree 0, killing homotopy groups in $\rc$-modules,  gives a map $M \lra
H\Fp$ non-zero in $\pi_0$). Conversely $H\Fp$ is in  $\langle
F\wedge \rc \rangle$ (if $M$ is a module which is a finite wedge of
copies of $H\Fp$ as a spectrum, then we can construct a map from a finite
wedge of copies of $F\sm \rc$ that is surjective on the bottom
homotopy; since $F\sm \rc$ is small, repeating this and passing to
direct limits,  we may kill all homotopy.  To construct the map, note
that for any chosen element of $\pi_0$ there is a map $F\lra M$ which 
maps onto it, and we extend it to an $\rc$-module map 
$\rc\sm F\lra M$). 
\end{proof}

Accordingly, the Mahowald-Rezk duality statement reads 
$$(\Z/p^{\infty})^{\cell_{H\Fp}\rc}\simeq \Sigma^c\rc. $$
Assuming the homotopy groups of $\rc$ are profinitely complete,  we may dualize to find 
$$\cell_{H\Fp}\rc \simeq \Sigma^{-c}(\Z/p^{\infty})^\rc$$

When $\pi_0(\rc)=\Z_p^{\wedge}$, this is precisely the
statement that  $\rc\lra H\Fp$ is Gorenstein of shift $-c$. Summarising, the above gives the following conclusion.

\begin{lemma}
If $\rc$ is an fp-spectrum of fp-type $n$, whose homotopy groups are $p$-complete,  then $\rc \to H\Fp $ is Gorenstein of shift $-c$ if and only if $\rc$ is Mahowald-Rezk self-dual of shift $c$.\qqed
\end{lemma}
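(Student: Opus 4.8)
The plan is to reduce the asserted equivalence to two intermediate equivalences, both routed through the statement of Gorenstein duality relative to $\Fp$, namely $\cell_{H\Fp}\rc\simeq\Sigma^{-c}(\Z/p^{\infty})^\rc$: first, that Mahowald--Rezk self-duality of shift $c$ is equivalent to this statement, and second, that this statement is equivalent to the bare Gorenstein condition $\Hom_\rc(H\Fp,\rc)\simeq\Sigma^{-c}H\Fp$. The only inputs are the preceding lemma, which replaces $C_n^f$ by $\cell_{H\Fp}$ on $\rc$-modules, and the Brown--Comenetz formalism of Section~2.

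For the first equivalence, apply the preceding lemma to rewrite the Mahowald--Rezk self-duality hypothesis $(\Z/p^{\infty})^{C_n^f\rc}\simeq\Sigma^c\rc$ as $(\Z/p^{\infty})^{\cell_{H\Fp}\rc}\simeq\Sigma^c\rc$; this is exactly the $\Z/p^{\infty}$-Brown--Comenetz dual of the Gorenstein duality equation. The two equations are therefore interchangeable provided $(\Z/p^{\infty})^{(-)}$ is biduality-reflexive on the objects involved. On the right-hand side this is where the hypothesis that the homotopy groups of $\rc$ are $p$-complete enters: since $\Z/p^{\infty}$ is injective there is no $\Ext$ term in (P0), so $\pi_*((\Z/p^{\infty})^{(\Z/p^{\infty})^\rc})=\Hom(\Hom(\pi_*\rc,\Z/p^{\infty}),\Z/p^{\infty})\cong\pi_*\rc$ by double Pontryagin duality, and the natural map $\rc\to(\Z/p^{\infty})^{(\Z/p^{\infty})^\rc}$ is an equivalence. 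On the left-hand side reflexivity is automatic: $\cell_{H\Fp}\rc$ is $H\Fp$-cellular, so its homotopy groups are $p$-power torsion, and torsion abelian groups are reflexive for Pontryagin duality.

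For the second equivalence, one implication is formal. Applying $\Hom_\rc(H\Fp,-)$ to $\cell_{H\Fp}\rc\simeq\Sigma^{-c}(\Z/p^{\infty})^\rc$, the left side becomes $\Hom_\rc(H\Fp,\rc)$ since the cellularization map $\cell_{H\Fp}\rc\to\rc$ is a $\Hom_\rc(H\Fp,-)$-equivalence, while the right side is $\Sigma^{-c}(\Z/p^{\infty})^{H\Fp}$ and (P0) computes $\pi_*((\Z/p^{\infty})^{H\Fp})=\Hom_{\Z_p^{\wedge}}(\Fp,\Z/p^{\infty})=\Fp$ concentrated in degree $0$, so $(\Z/p^{\infty})^{H\Fp}\simeq H\Fp$; hence $\Hom_\rc(H\Fp,\rc)\simeq\Sigma^{-c}H\Fp$. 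For the reverse implication I would use Morita theory (Subsection~\ref{subsec:Morita}): $H\Fp$ is proxy-small over $\rc$ because the small $\rc$-module $F\wedge\rc$ generates the same localizing subcategory (the second step of the preceding lemma), and the Gorenstein condition exhibits $\Hom_\rc(H\Fp,\rc)$ and $\Hom_\rc(H\Fp,\Sigma^{-c}(\Z/p^{\infty})^\rc)$ as equivalent to $\Sigma^{-c}H\Fp$; tensoring over $\cE=\Hom_\rc(H\Fp,H\Fp)$ with $H\Fp$ and using effective constructibility of $H\Fp$-cellularizations then yields $\cell_{H\Fp}\rc\simeq\Sigma^{-c}\cell_{H\Fp}((\Z/p^{\infty})^\rc)\simeq\Sigma^{-c}(\Z/p^{\infty})^\rc$, the last step because $(\Z/p^{\infty})^\rc$ is bounded above with $p$-torsion homotopy and hence already $H\Fp$-cellular.

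The step I expect to be the main obstacle is the orientability needed for this last implication: the equivalence $\Hom_\rc(H\Fp,\rc)\simeq\Hom_\rc(H\Fp,\Sigma^{-c}(\Z/p^{\infty})^\rc)$ of underlying $\rc$-modules is automatic from the Gorenstein condition, but to cellularize one needs it as an equivalence of right $\cE$-modules, equivalently one needs to pin down the action of $\Hom_\rc(\Fp,\Fp)$ on $\Fp$ --- precisely the ``unique action'' hypothesis flagged in the discussion of Gorenstein duality relative to $\Fp$. For the fp-spectra at hand this has to be verified (or the statement read with that proviso); everything else is a formal consequence of Section~2 and the preceding lemma.
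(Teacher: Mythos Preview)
Your approach matches the paper's: the lemma is stated with \qqed\ because the paper regards the preceding paragraph as the proof, namely the identification $C_n^f\simeq\cell_{H\Fp}$ followed by Brown--Comenetz dualization using $p$-completeness of $\pi_*\rc$. You have reproduced exactly this for your first equivalence.

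Two remarks. First, your second equivalence (between Gorenstein \emph{duality} and the bare Gorenstein condition) is more than the paper actually argues; the paper's sentence ``this is precisely the statement that $\rc\to H\Fp$ is Gorenstein of shift $-c$'' is using ``Gorenstein'' loosely for ``has Gorenstein duality'' here, and you are right that the literal Gorenstein condition requires the orientability caveat you flag. Second, your justification for reflexivity of $\cell_{H\Fp}\rc$ is not quite right: arbitrary $p$-torsion abelian groups are \emph{not} reflexive for $\Hom(-,\Z/p^{\infty})$ (an infinite direct sum of copies of $\Z/p$ fails). What one actually uses is that an fp-spectrum has homotopy groups that are degreewise finitely generated over $\Z_p$, so that the Pontryagin duals are degreewise of cofinite type, and such groups are reflexive; alternatively, once Mahowald--Rezk duality is assumed, the dual of $\cell_{H\Fp}\rc$ is a shift of $\rc$, whose reflexivity you have already checked.
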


\section{Gorenstein duality and Anderson self-duality}
\label{sec:GorDAndD}
In this section we explain that Gorenstein duality for a connective
ring spectrum gives an Anderson self-duality for the
associated non-connective spectrum. 

We note that  Anderson duality exchanges connective and coconnective
spectra, so we cannot expect to have self-duality for connective
spectra. Similarly, periodic spectra often fail to have residue
fields, so that the Gorenstein condition usually makes no sense for
them. Accordingly each approach plays an  essential role.

\subsection{Nullifying $HK$}
From our connective ring spectrum $\rc$ and the map $\rc \lra HK$
obtained by killing higher homotopy groups we may form a cofibre sequence
$$\cell_{HK}\rc \lra \rc \lra \rc [1/HK]$$
where $\rc \lra \rc [1/HK]$ is the initial map to a spectrum with no
maps from $HK$. We take $\Rc =\rc [1/HK]$, and it is a commutative
ring spectrum since $\rc$ is.

\subsection{Anderson self-duality from Gorenstein duality}
We are ready to bring the threads together. The most interesting 
implication is that Anderson self-duality follows from Gorenstein
duality. 

\newcommand{\NKr}{\rc [1/HK]}
\begin{prop}
\label{prop:GorDAndD}
If  $\rc\lra HK$ has Gorenstein duality  of shift $a$ then $\NKr$ 
has Anderson self-duality with shift $a+1$ in the sense that
 $$K^{\NKr}\simeq \Sigma^{-a-1}\NKr.$$ 

Furthermore, 

(i) $K^\rc \simeq \Sigma^{-a}\cell_{HK}\rc$.

(ii) 
The map $\eps: \cell_{HK} \rc\lra   \rc$ is self dual: if we apply  $\Hom_\rc(\cdot ,
K^\rc)$ to $\eps$,  we obtain the $a$th desuspension of $\eps$. 
\end{prop}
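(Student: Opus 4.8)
The plan is to start from the Gorenstein duality hypothesis
$\cell_{HK}\rc \simeq \Sigma^a\bbI$ where $\bbI = K^\rc = K_K^\rc$, and
to unwind the cofibre sequence $\cell_{HK}\rc \lra \rc \lra \NKr$
after applying Anderson $K$-duality $\Hom_\rc(\cdot, K^\rc)$. First I
would establish (i): since $K^\rc$ is bounded above with each homotopy
group a $K$-module it is $HK$-cellular, so $\cell_{HK}K^\rc = K^\rc$;
applying $\cell_{HK}$ to both sides of the Gorenstein duality
equivalence and using that $\cell_{HK}$ is idempotent gives
$\cell_{HK}\rc \simeq \Sigma^a K^\rc$, i.e. $K^\rc \simeq
\Sigma^{-a}\cell_{HK}\rc$. (One should double-check the shift
bookkeeping against the definition of torsion Gorenstein duality as
stated, but this is routine.)

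Next I would prove the main equivalence $K^{\NKr} \simeq
\Sigma^{-a-1}\NKr$. The key input is that Anderson duality turns the
localization cofibre sequence into a ``completion-type'' fibre
sequence. Concretely, apply $\Hom_\rc(\cdot, K^\rc)$ to $\cell_{HK}\rc
\lra \rc \lra \NKr$ to get a fibre sequence
$K^{\NKr} \lra K^\rc \lra \Hom_\rc(\cell_{HK}\rc, K^\rc)$.
The right-hand map should be identified with the completion map: by
the Morita/completion formalism recalled in Section \ref{sec:Gor},
$\Hom_\rc(\cell_{HK}\rc, K^\rc) = (K^\rc)_{HK}^\wedge$, and one checks
that $K^\rc \lra (K^\rc)_{HK}^\wedge$ is the natural completion map.
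Then using (i), $K^\rc \simeq \Sigma^{-a}\cell_{HK}\rc$, so the fibre
sequence becomes $K^{\NKr} \lra \Sigma^{-a}\cell_{HK}\rc \lra
\Sigma^{-a}(\cell_{HK}\rc)_{HK}^\wedge$, whose cofibre (after the
appropriate suspension) is $\Sigma^{-a}$ times the cofibre of
$\cell_{HK}\rc \lra \rc$ — but that cofibre is $\NKr$, and chasing the
triangle identifies $K^{\NKr}$ with $\Sigma^{-a-1}\NKr$. I would need
to be careful that the two cofibre sequences I am comparing really do
match up on the nose (not just abstractly), which is where a little
diagram chase is required.

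For (ii), the point is that $\eps\colon \cell_{HK}\rc \lra \rc$ sits
inside the defining cofibre sequence for $\NKr$, and applying
$\Hom_\rc(\cdot, K^\rc)$ sends this sequence to the completion/fibre
sequence above. By (i) the source $\cell_{HK}\rc$ has dual
$\Sigma^{-a}\cell_{HK}\rc$ (since $\Hom_\rc(\cell_{HK}\rc, K^\rc) \simeq
(K^\rc)^\wedge_{HK} = K^{\cell_{HK}\rc}$ needs care, but in the
relevant completed/cellular world it reduces to $\rc \mapsto \rc$
duality pattern); tracking the map shows $\Hom_\rc(\eps, K^\rc)$ is
again of the form $\eps$ up to the shift $\Sigma^{-a}$. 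This is really
just the observation that $\eps$ and its ``dual'' both present the
same triangle, so it follows once (i) and the identification of the
dual cofibre sequence are in hand.

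The main obstacle I anticipate is the bookkeeping of the completion map
and making sure the Anderson dual $\Hom_\rc(\cell_{HK}\rc, K^\rc)$ is
correctly identified as $(K^\rc)_{HK}^\wedge$ and, via (i), as
$\Sigma^{-a}(\cell_{HK}\rc)_{HK}^\wedge \simeq \Sigma^{-a}\rc_{HK}^\wedge
\simeq \Sigma^{-a}\rc$ (using connectivity of $\rc$, Remark
\ref{rem:conncomp}); the whole argument hinges on matching the
Anderson-dualized localization triangle with the cellularization
triangle, and the degree $+1$ in the shift comes precisely from the
connecting map in that triangle. Once that identification is pinned
down, (i) and (ii) are essentially immediate corollaries.
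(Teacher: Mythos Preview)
Your proposal is correct and takes essentially the same approach as the paper: dualize the cofibre sequence $\cell_{HK}\rc \to \rc \to \NKr$, identify $\Hom_\rc(\cell_{HK}\rc, K^\rc)$ with $\Sigma^{-a}\rc_{HK}^\wedge \simeq \Sigma^{-a}\rc$ via (i) and connectivity, and then check that the dualized map is $\Sigma^{-a}\eps$. The only difference is organizational: the paper treats (ii) as the crux and derives the Anderson self-duality as an immediate corollary, whereas you aim for the self-duality first and view (ii) as the residual bookkeeping; the ``diagram chase'' you flag is exactly what the paper does by observing that $\eps^*\colon \Sigma^{-a}\cell_{HK}\rc \to \Sigma^{-a}\rc$ has the universal property of $HK$-cellularization (cellular source, $HK$-cellular equivalence), hence equals $\Sigma^{-a}\eps$.
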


\begin{remark}
Note that the Anderson self-duality statement
makes it natural to write the suspension on the side of the ring
$$K^{\Rc}\simeq \Sigma^{-a-1}\Rc, $$
since it says the Anderson dual is a shift of the original ring. 
The Gorenstein duality statement makes it natural to put the suspension
on the side of the  Anderson dual
$$\cell_{HK}\rc \simeq \Sigma^a K^{\rc}, $$
since it says the cellularization is a shift of a naive expectation. 

Of course it is easy to pass between the two, but the first convention
focuses on a shift (viz $-a-1$) that is usually positive whereas the second convention 
focuses on a shift (viz $a$) that is usually negative. 

\end{remark}

\begin{proof}
Part (i) is a restatement of Gorenstein duality, and
the Anderson self-duality  is an immediate consequence of Part (ii).

It remains only to prove that the map in (ii) is self dual. However we
note that  maps $\eps: \cell_{HK} \rc\lra \rc$ are easily classified since
$\Hom_\rc(\cell_{HK} \rc , \rc)\simeq \rc$ with $\pi_0(\rc)=K$. 

To see that the dual of $\eps$ is as required, let 
$$\rho : \cell_{HK} \rc \stackrel{\simeq}\lra \Sigma^a K^\rc$$
be the given equivalence. Since $\rho$ is an equivalence
we may use $\Hom_\rc( \cdot, \Sigma^{-a}\cell_{HK} \rc)$ as the dualization. Then 
$\eps$ dualizes to 
\begin{align*} \eps^*: \Sigma^{-a}\cell_{HK} \rc\simeq & \Hom_\rc(\rc, \Sigma^{-a}\cell_{HK} \rc) \lra \\
&\lra \Hom_\rc(\cell_{HK} \rc,
\Sigma^{-a}\cell_{HK} \rc)\simeq \Sigma^{-a}\rc_{HK}^{\wedge}\simeq
\Sigma^{-a}\rc, 
\end{align*}
where the last equivalence is because $\rc$ is connective (see Remark \ref{rem:conncomp}).
It is easy to see this has the universal property of cellularization
and is therefore the suspension of $\eps$.
\end{proof}

On the other hand, if we have Anderson self-duality in the sense that
 $$\Sigma^{a+1} K^{\NKr}\simeq \NKr,$$ then it is not
automatic that $\rc$ has Gorenstein duality without additional
 connectivity statements (for example Meier \cite{MeierLevels} shows $Tmf_1(23)$ is
 Anderson self-dual, with $a=0$, whereas one can see by considering complex
 modular forms with level 23 structure that its connective cover does not
enjoy  Gorenstein duality). 

\begin{lemma}
\label{lem:AndDGorD}
Suppose that 
$$K^{\NKr}\simeq \Sigma^{-a-1}\NKr $$
with $a\leq -2$. 

If  $\pi_i(\cell_{HK} \rc)=0$ for $i\geq a+1$, and $\pi_{a}(\cell_{HK} \rc)$ is projective over 
$K$, then $\rc$ has  Gorenstein duality of
shift $a$. 
\end{lemma}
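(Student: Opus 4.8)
The plan is to dualise the defining cofibre sequence $\cell_{HK}\rc \to \rc \to \Rc$ (recall $\Rc = \rc[1/HK]$) by the Anderson dual functor $\Hom_\rc(-,K^\rc)$ and to use the hypotheses to pin down all three terms of the result. Writing $K^M$ for $\Hom_\rc(M,K^\rc)$, dualising gives a cofibre sequence
$$K^\Rc \lra K^\rc \lra K^{\cell_{HK}\rc}, $$
in which $K^\Rc \simeq \Sigma^{-a-1}\Rc$ by hypothesis and $K^{\cell_{HK}\rc}=(K^\rc)_{HK}^{\wedge}$ by the definition of completion. The claim I would aim for is that the third term is $\Sigma^{-a}\rc$. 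Granting this, rotating the displayed sequence and shifting by $\Sigma^a$ produces a cofibre sequence $\Sigma^a K^\rc \to \rc \to \Rc$; applying $\Hom_\rc(HK,\cdot)$ and using that $\Rc$ admits no maps from $HK$ shows the map $\Sigma^a K^\rc\to\rc$ is an $\Hom_\rc(HK,\cdot)$-equivalence, and since $K^\rc$ is already $HK$-cellular this exhibits $\Sigma^a K^\rc$ as $\cell_{HK}\rc$, which is precisely torsion Gorenstein duality of shift $a$.

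The substance is thus the identification $K^{\cell_{HK}\rc}\simeq \Sigma^{-a}\rc$, which I would carry out by computing homotopy groups. First record the ranges: $\cell_{HK}\rc$ is concentrated in degrees $\leq a$ by hypothesis, and a diagram chase on the defining cofibre sequence---this is where $a\leq -2$ is used---shows $\rc\to\Rc$ is an isomorphism in non-negative degrees, so in particular $\pi_0(\Rc)\iso K$; consequently $K^\rc$ is concentrated in degrees $\leq 0$ and $K^\Rc\simeq\Sigma^{-a-1}\Rc$ in degrees $\leq 0$ together with degrees $\geq -a-1$. Feeding these facts into the long exact sequence of the dualised cofibre sequence gives $\pi_n(K^{\cell_{HK}\rc})\iso\pi_{n-1}(K^\Rc)\iso\pi_{n+a}(\rc)$ for all $n\geq -a$. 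In degrees $n<-a$ the Anderson form of the exact sequence (P0) from Subsection \ref{subsec:PropAnderson} computes $\pi_n(K^{\cell_{HK}\rc})$ out of $\Hom_K(\pi_{-n}(\cell_{HK}\rc),K)$ and $\Ext^1_K(\pi_{-n-1}(\cell_{HK}\rc),K)$: the first vanishes since $\pi_i(\cell_{HK}\rc)=0$ for $i\geq a+1$, and the only surviving candidate for the second, namely $\Ext^1_K(\pi_a(\cell_{HK}\rc),K)$ in degree $-a-1$, vanishes exactly because $\pi_a(\cell_{HK}\rc)$ is projective over $K$. Hence $K^{\cell_{HK}\rc}$ is concentrated in degrees $\geq -a$ with $\pi_n(K^{\cell_{HK}\rc})\iso\pi_{n+a}(\rc)$ throughout.

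To promote this to an actual equivalence I would observe that all of these identifications are $\pi_*\rc$-linear (the connecting maps are maps of $\pi_*\rc$-modules, and $\rc\to\Rc$ is a ring map that is an isomorphism in non-negative degrees), so $\pi_*(K^{\cell_{HK}\rc})\iso\Sigma^{-a}\pi_*(\rc)$ as graded $\pi_*\rc$-modules; a generator in degree $-a$ then determines a map $\Sigma^{-a}\rc\to K^{\cell_{HK}\rc}$ of $\rc$-modules which is an isomorphism on homotopy, hence an equivalence. Equivalently, in the language of Remark \ref{rem:conncomp}, one is checking that the completion $(K^\rc)_{HK}^{\wedge}$ coincides with $\Sigma^{-a}\rc\simeq\Sigma^{-a}\rc_{HK}^{\wedge}$.

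The delicate point, and where the two extra hypotheses really bite, is the homotopy bookkeeping of the middle paragraph: one needs the top of $\cell_{HK}\rc$ to lie strictly below degree $0$, which is precisely what $a\leq -2$ buys and what forces $\pi_0(\Rc)=K$ rather than a nontrivial extension, and one needs projectivity of $\pi_a(\cell_{HK}\rc)$ over $K$ at the single spot where an $\Ext^1$ term would otherwise obstruct the vanishing of $\pi_{-a-1}(K^{\cell_{HK}\rc})$. Without either hypothesis $K^{\cell_{HK}\rc}$ ceases to be a suspension of $\rc$ and the recognition step collapses; everything else is formal manipulation of the two cofibre sequences.
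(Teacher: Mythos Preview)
Your proof is correct and follows essentially the same route as the paper: dualise the defining cofibre sequence, use Property (P0) with the vanishing and projectivity hypotheses to control $K^{\cell_{HK}\rc}$ in low degrees, identify $\Sigma^a K^{\cell_{HK}\rc}$ with $\rc$, and then recognise the resulting sequence as the $HK$-cellularisation sequence because the fibre $\Sigma^a K^\rc$ is $HK$-cellular and the target $\Rc$ admits no maps from $HK$.

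The one point of divergence is the identification $\Sigma^a K^{\cell_{HK}\rc}\simeq\rc$. You compute all homotopy groups, verify the isomorphisms are $\pi_*\rc$-linear, and then realise the equivalence by choosing a generator in the bottom degree. The paper instead argues more structurally: having shown $\Sigma^a K^{\cell_{HK}\rc}$ is connective, it observes that both $\rc$ and $\Sigma^a K^{\cell_{HK}\rc}$ sit over $\NKr$ with fibres concentrated in degrees $\leq a$, so each is the $(a{+}1)$-connected cover $(\NKr)[a{+}2,\infty)$. This Postnikov-truncation argument sidesteps the bookkeeping of $\pi_*\rc$-linearity (which in your version tacitly uses that the hypothesised equivalence $K^\Rc\simeq\Sigma^{-a-1}\Rc$ is one of $\rc$-modules). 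Your approach is perfectly valid but slightly more hands-on; the paper's buys you the equivalence of $\rc$-modules for free from the universal property of the connective cover.
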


\begin{proof}
We apply $\Hom (\cdot, K^\rc)$ to  the cofibre sequence 
\begin{align}\label{CechCofSeq}
\cell_{HK} \rc \lra \rc \lra \NKr
\end{align}
 to  obtain 
$$\Hom_\rc(\cell_{HK} \rc, K^\rc)\lla K^\rc\lla \Sigma^{-a-1}\NKr. $$
Suspending $a$ times and taking mapping cones, we obtain the cofibre sequence
\begin{align}\label{CechDualSeq}
\NKr \lla \Sigma^a\Hom_\rc(\cell_{HK} \rc , K^\rc) \lla \Sigma^a K^\rc,
\end{align}
and we want to check that this is equivalent to the original \eqref{CechCofSeq}.

From the hypotheses, $\pi_t(\Sigma^a \Hom_\rc(\cell_{HK} \rc, K^\rc))$ is zero for $t\leq -1$. Indeed, from the Anderson dual Property (P0), this homotopy group sits in an exact sequence 
\begin{align*}
0 \lra \Ext^1_K(\pi_{-t+a-1}\cell_{HK} \rc, K ) &\lra \pi_t(\Sigma^a \Hom_\rc(\cell_{HK} \rc, K^\rc)) \\
&\lra \Hom_{K}(\pi_{-t+a}(\cell_{HK} \rc), K) \lra 0,
\end{align*}
and for $t \leq -1$, both the $\Hom$ and $\Ext$ term vanish.
Hence
$$\rc\simeq (\NKr)[a+2, \infty)\simeq \left( \Sigma^a \Hom_\rc(\cell_{HK} \rc,
  K^\rc)\right) [a+2, \infty) \simeq \Sigma^a \Hom_\rc(\cell_{HK} \rc, K^\rc);$$
  the first and second equivalence are because $(\cell_{HK}\rc)
  [a+2,\infty)$ and $(\Sigma^a K^{\rc}) [a+2,\infty) $ respectively are contractible.
  
 Thus the middle term of the sequence \eqref{CechDualSeq} is $\rc$; it remains to check that its map to $\rc[1/HK]$ satisfies the requisite universal property. This follows since the fibre $\Sigma^a K^\rc$ is clearly $HK$-cellular. We conclude that $\cell_{HK} \rc \simeq \Sigma^a K^\rc$ as required.
\end{proof}

\section{Examples with polynomial or hypersurface coefficient rings }
\label{sec:hypgeom}

There are quite a number of examples that are algebraically very
simple, so that we can apply our results without additional work, 
and we discuss a selection of those  here. 

\subsection{The \Cech\ complex}
When the coefficient ring is  simple, we have very algebraic models
of the cellularization $\cell_{HK} M$ and $M[1/HK]$. We briefly recall
the construction here (see \cite{GMjames} for more details). 

Suppose that $\fn =(x_1, \ldots , x_r)$ is an ideal in the coefficient
ring $\rc_*$. 
There is an elementary construction of the \Cech\ spectrum $\Cn \rc$ as follows. 
First we form the stable Koszul complex 
$$\Gamma_{\fn} \rc = \Gamma_{(x_1)}\rc \tensor_\rc \cdots \tensor_\rc \Gamma_{(x_r)}\rc $$
where $\Gamma_{(x)}\rc=\fibre(\rc\lra \rc[1/x])$. We note that the homotopy type does not
depend on the particular generators $x_i$. Indeed, it is obvious that replacing generators $x_i$ by powers has no effect,
and it is not hard  to see that $\Gamma_{\fn} $ only depends on the radical of
the ideal $\fn$. 

Now define $\Cn \rc$ by the fibre sequence 
$$\Gamma_{\fn} \rc \lra \rc \lra \Cn \rc .$$
It is easy to check that $\Cn \rc$ is a commutative ring up to homotopy,
but  it can also be constructed \cite{GMjames}  as the nullification 
$$\Cn \rc \simeq \rc [\frac{1}{(\rc/\bfx)}],$$ where 
$$\rc/\bfx =\rc/x_1\tensor_{\rc}\cdots \tensor_{\rc}\rc/x_n$$ 
is the unstable Koszul complex. It follows that $\Cn \rc$ admits the structure of a  commutative ring.

The case we have in mind is that $\rc$ is connective with $\rc_*$ Noetherian and
$$\fn=\ker (\rc_*\lra \rc_0=K). $$
The relevance is clear from a lemma. 

\begin{lemma}
Suppose $\rc_*$ is a polynomial ring over $K$ or a hypersurface (i.e. $\rc_*=K[x_1, \ldots , x_n]/(f)$
with  $f$ of positive degree).
The $\rc$-module $HK$ is proxy-small. For an $\rc$-module $M$ we have equivalences
$$\cell_{HK}M\simeq \Gn M \mbox{   and } \Cn M \simeq M[\frac{1}{HK}] .$$
\end{lemma}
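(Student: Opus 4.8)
The plan is to verify the three assertions in turn, using the theory of proxy-smallness and the explicit \v{C}ech/Koszul models. First I would establish proxy-smallness of $HK$ as an $\rc$-module. When $\rc_* = K[x_1,\dots,x_n]$, the $\rc$-module $\rc/\bfx = \rc/x_1 \tensor_\rc \cdots \tensor_\rc \rc/x_n$ is a small (finitely built from $\rc$) object, since each $\rc/x_i$ is the cofibre of multiplication by $x_i$ on $\rc$. One checks that $\rc/\bfx$ has homotopy groups that are finitely generated free $K$-modules concentrated in finitely many degrees (a Koszul computation), so that $HK$ finitely builds $\rc/\bfx$ and vice versa; hence $\rc/\bfx$ serves as the proxy-small witness $\HKt$. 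In the hypersurface case $\rc_* = K[x_1,\dots,x_n]/(f)$ the same works: $\rc/\bfx$ is still small, and although $\pi_*(\rc/\bfx)$ is now the Koszul homology of a non-regular sequence it remains a bounded complex of finitely generated free $K$-modules, so proxy-smallness again holds. The key input here is that $\rc_*$ is Noetherian of finite global-dimension-type behaviour in the relevant range, which polynomial and hypersurface rings satisfy.

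Next I would identify the localizing subcategory generated by $HK$ with that generated by $\rc/\bfx$ (immediate from the previous paragraph), and then invoke the standard fact that $\Gamma_\fn M = \fibre(M \to \Cn M)$ computes the cellularization with respect to $\rc/\bfx$, hence with respect to $HK$. Concretely, $\Gamma_\fn M$ is built from $\rc/\bfx$ by construction (the stable Koszul complex $\Gamma_{(x_i)}\rc = \fibre(\rc \to \rc[1/x_i])$ is finitely built from $\rc/x_i^k$'s in the limit, and one assembles these), so $\Gamma_\fn M$ is $HK$-cellular; and the map $\Gamma_\fn M \to M$ is an $\Hom_\rc(\rc/\bfx, -)$-equivalence because $\rc/\bfx \tensor_\rc \Cn M \simeq 0$ — this last point is exactly the statement that $\Cn\rc$ is the nullification $\rc[1/(\rc/\bfx)]$, already recorded in the excerpt. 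By uniqueness of cellularization this gives $\cell_{HK}M \simeq \Gamma_\fn M$, and then $M[1/HK] \simeq \cofibre(\cell_{HK}M \to M) \simeq \cofibre(\Gamma_\fn M \to M) = \Cn M$.

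The main obstacle I expect is the proxy-smallness verification in the hypersurface case: one must confirm that $\pi_*(\rc/\bfx)$ is genuinely finite over $K$ and concentrated in a bounded range even when $(x_1,\dots,x_n)$ fails to be a regular sequence on $\rc_*$, so that the Koszul homology $H_*(K(\bfx); \rc_*)$ is finitely generated — this uses that $K[x_1,\dots,x_n]/(f)$ is still module-finite over a polynomial subring and that $\fn$ is the irrelevant maximal ideal, so the Koszul homology is $\fn$-torsion and finitely generated, hence of finite length. A secondary subtlety is checking that $\Gamma_\fn M$ lies in the localizing subcategory generated by $HK$ rather than merely being $\fn$-torsion: one needs that $\Gamma_{(x_i)}\rc$, being $\hocolim_k \Sigma^{-1}\rc/x_i^k$, is built from the $\rc/x_i^k$, each of which is finitely built from $\rc/x_i$, and then that iterated smash products of such objects over $\rc$ land in $\langle \rc/\bfx\rangle = \langle HK\rangle$; this is routine but worth stating carefully. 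Everything else — the ring structure on $\Cn\rc$, independence of generators — is already provided in the excerpt or is a standard triangulated-category argument.
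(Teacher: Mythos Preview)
Your strategy matches the paper's: take $\HKt = \rc/\bfx$ as the proxy-small witness and identify $\langle HK\rangle = \langle \rc/\bfx\rangle$, so that the known equivalence $\Gamma_{\fn} \simeq \cell_{\rc/\bfx}$ becomes $\cell_{HK}$. However, there is a gap in the hypersurface case: you show $HK$ finitely builds $\rc/\bfx$ (bounded Koszul homology), but you do not show that $\rc/\bfx$ builds $HK$. Your ``and vice versa'' is justified only in the polynomial case, where indeed $\rc/\bfx \simeq HK$; for a genuine hypersurface $HK$ is not small (the residue field has infinite projective dimension over a non-regular ring), so $\rc/\bfx$ cannot \emph{finitely} build it, and you supply no argument that it builds it at all. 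The claim that $\langle HK\rangle = \langle \rc/\bfx\rangle$ is ``immediate from the previous paragraph'' is therefore unsupported, and with it proxy-smallness. (The phrase ``finite global-dimension-type behaviour'' is also misleading here: hypersurfaces have infinite global dimension.)

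The paper fills this gap explicitly. It first computes $\pi_*(\rc/\bfx) = K[\phi]/(\phi^2)$ with $|\phi| = |f|+1$, by comparing the two orders of taking homology in the Koszul complex for $f, x_1,\dots,x_r$ over the ambient polynomial ring $P=K[x_1,\dots,x_r]$. It then constructs $HK$ as a sequential colimit: starting from $HK^0=\rc/\bfx$, one iteratively kills the top homotopy class via cofibre sequences $\Sigma^{t(s+2)-1}\rc/\bfx \to HK^t \to HK^{t+1}$ (with $s=|f|$), so that $\hocolim_t HK^t \simeq HK$ lies in $\langle \rc/\bfx\rangle$. A shorter fix, latent in your own final paragraph, is to note that $HK = \Gamma_{\fn} HK$ (each $x_i$ acts as zero on $HK$, so $HK[1/x_i]\simeq *$) and that $\Gamma_{\fn}(-)$ always lands in $\langle \rc/\bfx\rangle$ by the stable Koszul description; but you would need to make this connection explicit, since as written your argument that $\Gamma_{\fn} M$ is $HK$-cellular already presupposes $\langle \rc/\bfx\rangle = \langle HK\rangle$.
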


\begin{proof}
We will show that  $HK$ finitely
builds $\HKt= \rc/\bfx $ and $\HKt$ builds $HK$. This shows that $\HKt$
is a witness for the proxy-smallness of $HK$ and in particular shows
that  $HK $ and $\HKt$ generate the same localizing subcategory. 


If $\rc_*$ is a polynomial ring then $HK$ is itself small: we 
take $\HKt=\rc/\bfx $. We have a map $\rc \lra \HKt$ and
calculation immediately shows it is an isomorphism in $\pi_0$ so that
$HK\simeq \HKt$.

If $\rc_*=K[x_1, \ldots , x_r]/(f)$ with $f$ of degree $s$ then we
take $\HKt =\rc/\bfx$ and calculate
$\pi_*(\HKt)=K[\phi]/(\phi^2)$, where $\phi$ is of degree $s+1$. We
need only observe this is additively the homology of the short cochain
complex 
$$\Sigma^s
\rc_*\stackrel{f} \lra \rc_*.$$
 To  see this, consider the polynomial 
ring  $P= K[x_1, \ldots , x_r]$ and form the Koszul complex $KP$  for the elements
$f, x_1, \cdots , x_r$. If we view $KP$ as a multicomplex and  take homology  in the order stated, it
is the homology of the displayed complex.  If we take homology in the order
$ x_1, \ldots, x_r, f$ then it is evidently $K[\phi]/(\phi^2)$. 
Killing homotopy groups in
$\rc$-modules gives a cofibre sequence $\Sigma^{s+1}HK \lra \HKt\lra
HK$ showing that $HK$ finitely builds $\HKt$.

Similarly we may construct $HK$ from $\HKt$ by a process of killing homotopy 
groups, but now using $\HKt$ only. More precisely, we take 
$HK^0=\HKt$ and iteratively construct $HK^{t+1}$ using a cofibre
sequence
$$\Sigma^{t(s+2)-1} \HKt \lra HK^t\lra HK^{t+1}$$
where $\pi_*(HK^t)$ is zero except in degrees 0 and $t(s+2)-1$ where
it is $K$. The attaching map is chosen to be an isomorphism in degree
$t(s+2)-1$. We see that $HK^{\infty}=\hocolim_t HK^t$ is an
Eilenberg-MacLane spectrum, and the map 
$$\rc \lra \HKt=HK^0\lra \hocolim_tHK^t$$ 
is an isomorphism in $\pi_0$ showing that $HK\simeq \hocolim_tHK^t$ as
$\rc$-modules. 
\end{proof}

\subsection{The algebraic context}
As usual we have a connective ring
spectrum $\rc$ with $\pi_0(\rc)=K$.  We assume that $K$ is  a localization or a completion of
a number ring (usually $\Z$), that $\rc_*$ is in even degrees,  free over $K$ and of Krull
dimension 2 and is either polynomial or a hypersurface ring.
Some examples are tabulated in Subsection \ref{subsec:egtab} below. 

In fact we suppose
$$\rc_*=K[x,y,z]/(f) \mbox{ with } |x|=i, |y|=j, |z|=k, |f|=d. $$
The case of a polynomial ring is a little simpler, but in any case it
is covered by taking $f=z$.

Thus $\rc_*$ is a relative complete intersection, and $\rc_*$ is
Gorenstein (and accordingly $\rc$ is itself Gorenstein). Indeed, it is
easy to calculate local cohomology (the cohomology of the stable
Koszul complex), directly or by local duality to see
$$H^*_{\fn}(\rc_*)= H^2_{\fn}(\rc_*)=\Sigma^{a+2} \rc_*^{\vee}$$
where $\rc_*^{\vee}=\Hom_{K}(\rc_*, K)$ and $a=d-(i+j+k)-2.$
Since this is in a single cohomological degree the spectral sequence
\cite[Theorem 4.1]{KEG} for calculating homotopy collapses to give 
$$\pi_*(\Gn \rc)=\Sigma^a\rc_*^{\vee} =\pi_*(\Sigma^aK^\rc).$$
Assuming $a\leq -2$,  the map $\Gn \rc \lra \rc$ is zero in homotopy and 
the cofibre sequence
$$\Gn \rc \lra \rc \lra \Cn \rc$$
gives an isomorphism 
$$\pi_*(\Cn \rc)=\rc_*\oplus \Sigma^{a+1} \rc_*^{\vee} ;$$
since $a$ is even, and $\rc_*$ is in even degrees,  this is an isomorphism of  $\rc_*$-modules. 

From the  algebraic isomorphism
$$\pi_*(\Gn \rc)=\pi_*(\Sigma^aK^\rc), $$
we choose an isomorphism $\pi_a(\Gn \rc)\stackrel{\cong}\lra
\pi_a(\Sigma^aK^{\rc})$, and since the homotopy of $\Gn \rc$ is free
over $K$, the defining property of the Anderson dual gives  a residue map 
$$\rho: \Gn \rc \lra \Sigma^a K^\rc.$$
 To see that $\rho$ is an
equivalence we note that both domain and codomain are $HK$-cellular,
and hence it is enough to show it induces an equivalence
$$\Sigma^a HK=\Hom_\rc(HK, \Gn \rc)\stackrel{\rho_*}\lra \Hom_\rc(HK, \Sigma^{a}
K^\rc)=\Sigma^a HK.$$
We note that this shows that $\rc$ has  Gorenstein duality, since the
spectrum $HK$ has a unique $\cE$-module structure by connectivity.

\subsection{A family of examples}
\newcommand{\fMb}{\overline{\mathfrak{M}}}
Our examples come from derived algebraic geometry. We concentrate on
the case of topological modular forms with level structure for
definiteness. We begin with the compactified moduli stack $\fMb
=\fMb_{ell}(\Gamma)$ of elliptic curves with level $\Gamma$ structure,
on which we have the Goerss-Hopkins-Miller sheaf $\cO^{top}$  of
$E_{\infty}$-ring spectra (see \cite{HLLevel} for the log-\'etale
refinement appropriate for level structures), and then take
$$Tmf(\Gamma) =\Gamma (\fMb_{ell}(\Gamma), \cO^{top}). $$
The homotopy groups of this are calculated through a spectral sequence
$$H^s(\fMb_{ell}(\Gamma); \omega^{\tensor t})\Rightarrow
Tmf(\Gamma)_{2t-s}, $$
where $\omega$ denotes the sheaf of invariant differentials on $\fMb_{ell}(\Gamma)$.
Consider those level structures for which $\fMb_{ell}(\Gamma)$ is representable. Then $\fMb_{ell}(\Gamma)$ 
 is in fact a curve, and thus the spectral sequence collapses to give
$$Tmf(\Gamma)_{2t}=H^0(\fMb_{ell}(\Gamma); \omega^{\tensor t}) $$
and 
$$ Tmf(\Gamma)_{2t-1} = H^1(\fMb_{ell}(\Gamma); \omega^{\tensor t}). $$

Assume that $H^1(\fMb_{ell}(\Gamma); \omega^{\tensor t}) $ is zero for all $t\geq 0$ (which happens in many cases), so that the contribution from $H^1$ is entirely in negative
degrees. Then we may take 
$tmf(\Gamma)$ to be the connective cover of $Tmf(\Gamma)$ and obtain 
$$tmf(\Gamma)_*=H^0(\fMb_{ell}(\Gamma); \omega^{\tensor */2}). $$
However if $H^1$ does not have the vanishing property, it may be much trickier to construct
$tmf(\Gamma)$ with this property. In specific examples, it could be done by hand, by killing the extra homotopy groups of the connective cover, as Hill-Lawson \cite{HLShimura} and Lawson \cite{LawsonD15} do for the similarly behaved topological automorphic forms of discriminants 6 and 15.
However we come by it, we
assume the existence of a spectrum $tmf(\Gamma)$ realizing the $H^0$
part, and a map
$tmf(\Gamma)\lra Tmf(\Gamma)$ inducing a monomorphism on homotopy
groups. 

We continue taking $\fn$ to be the ideal of positive degree elements
of $\rc_*$,
and in our cases this is a finitely generated ideal so that we can
make the localization $tmf (\Gamma)\lra \Cn tmf(\Gamma)$.
\begin{lemma}
\label{lem:Cn}
The map $\ell: tmf(\Gamma )\lra Tmf(\Gamma)$ induces an equivalence
$$\Cn tmf(\Gamma )\simeq Tmf(\Gamma). $$ 
\end{lemma}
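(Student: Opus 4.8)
The plan is to show that the map $\ell\colon tmf(\Gamma)\to Tmf(\Gamma)$ exhibits $Tmf(\Gamma)$ as the nullification $tmf(\Gamma)[1/HK]$, and then invoke the identification $\Cn tmf(\Gamma)\simeq tmf(\Gamma)[1/HK]$ supplied by the \Cech\ complex discussion (the nullification description $\Cn M\simeq M[1/HK]$, valid here because $\fn=\ker(\rc_*\to K)$ and the coefficient ring is polynomial or a hypersurface). So the real content is an abstract characterisation: $Tmf(\Gamma)$ is the initial $tmf(\Gamma)$-module receiving a map from $tmf(\Gamma)$ that is an $\Hom_{tmf(\Gamma)}(HK,-)$-equivalence and admits no nonzero maps from $HK$. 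Equivalently, writing $r=tmf(\Gamma)$, $R=Tmf(\Gamma)$ and $F=\fibre(\ell)$, I want to show $F\simeq \cell_{HK} r= \Gn r$, since then the cofibre sequence $\Gn r\to r\to R$ is exactly the defining sequence $\Gn r\to r\to \Cn r$.

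First I would compute the homotopy groups of $F$. By hypothesis $\ell$ is injective on homotopy groups, and the periodicity/collapse of the descent spectral sequence identifies $\pi_*R = Tmf(\Gamma)_*$ with $H^0(\fMb_{ell}(\Gamma);\omega^{\otimes *})\oplus(\text{negative-degree }H^1\text{ classes})$, while $\pi_* r = tmf(\Gamma)_*$ is exactly the $H^0$-part (by our standing assumption, realised by $tmf(\Gamma)$). Hence the long exact sequence of the fibration forces $\pi_* F$ to be concentrated in negative degrees and to agree there with the $H^1$-contribution $\Sigma^{-1}\,r_*^{\vee}$-type piece; in the polynomial/hypersurface situation of the previous subsection this is precisely $\pi_*(\Gn r) = \Sigma^a r_*^{\vee}$, which lives in (even, negative) degrees since $a\le -2$. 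So $\pi_* F\cong\pi_*(\Gn r)$ as graded $r_*$-modules. In particular $F$ is bounded above with each homotopy group a $K$-module, hence $F$ is already $HK$-cellular.

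Next I would upgrade this to an equivalence of $r$-modules $F\simeq \Gn r=\cell_{HK} r$. Since $F$ is $HK$-cellular, there is a canonical comparison map $F\to \cell_{HK} r$ (from $F\to r$ and the universal property of cellularization); equivalently one builds a residue map $\Gn r\to F$ as in the previous subsection out of an isomorphism $\pi_a(\Gn r)\xrightarrow{\cong}\pi_a(F)$, using that $\pi_*\Gn r$ is free over $K$ and the defining property of the relevant dual. Because both source and target are $HK$-cellular, checking this map is an equivalence reduces to checking it induces an equivalence after applying $\Hom_r(HK,-)$; and both $\Hom_r(HK,F)$ and $\Hom_r(HK,\Gn r)$ are shifts $\Sigma^a HK$, with $HK$ carrying a unique $\cE$-module structure by connectivity, so an isomorphism in the appropriate degree suffices. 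This gives $F\simeq \cell_{HK} r$, so the cofibre of $\ell$ is $r[1/HK]$ and therefore $\Cn tmf(\Gamma)\simeq tmf(\Gamma)[1/HK]\simeq Tmf(\Gamma)$.

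The main obstacle is the homotopy-group bookkeeping in the first step: one must be sure that the only difference between $tmf(\Gamma)_*$ and $Tmf(\Gamma)_*$ is the negative-degree $H^1$-part and that this matches $\pi_*\Gn r$ on the nose (including that nothing survives in nonnegative degrees, which is where the standing vanishing assumption $H^1(\fMb_{ell}(\Gamma);\omega^{\otimes t})=0$ for $t\ge 0$, together with injectivity of $\ell_*$, is doing the real work). Once the homotopy groups are pinned down, the passage from a homotopy-group isomorphism to an $r$-module equivalence is the now-standard $HK$-cellular argument used already in the algebraic-context subsection, and the final identification with $\Cn$ is just the nullification description of the \Cech\ construction.
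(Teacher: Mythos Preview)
Your approach has a real gap. You correctly reduce to showing $F \simeq \Gn r$, but the argument is incomplete. First, matching $\pi_* F$ with $\pi_* \Gn r$: the long exact sequence gives $\pi_* F$ as (a shift of) the sheaf cohomology $H^1(\fMb_{ell}(\Gamma); \omega^{\otimes *})$, while $\pi_* \Gn r = \Sigma^a r_*^{\vee}$ comes from the local cohomology $H^2_{\fn}(r_*)$; identifying these is a \Cech-versus-Koszul comparison you assert but do not carry out. Second, and more seriously, even granting this, your equivalence argument fails. The universal-property map $F \to \cell_{HK} r$ induces on $\Hom_r(HK,-)$ precisely the map $\Hom_r(HK,F)\to \Hom_r(HK,r)$, which is an equivalence exactly when $\Hom_r(HK, Tmf(\Gamma)) = 0$; knowing source and target are each abstractly $\Sigma^a HK$ does not make the map between them nonzero, and ``unique $\cE$-module structure on $HK$'' is a statement about structures, not about maps. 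Your alternative ``residue map $\Gn r \to F$'' has the arrow reversed (the Anderson-dual property produces maps \emph{into} $K^r$), and promoting the resulting $F \to \Gn r$ to an equivalence from the single degree $a$ would require knowing $\pi_* F \cong \Sigma^a r_*^{\vee}$ as an $r_*$-module, which is the unproved first point.

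The paper's argument is quite different and avoids homotopy-group matching entirely. It verifies the universal property of nullification directly: the fibre of $\ell$ is bounded above, so $\bfT[1/x] \simeq \bft[1/x]$ for each $x \in \fn$ and the fibre is $(\bft/\bfx)$-cellular; and $\bfT = Tmf(\Gamma)$ is $(\bft/\bfx)$-null because $\fMb$ has a finite affine cover by opens $\fMb[1/y]$ with $y \in \fn$, so $\bfT$ is built from the spectra $\bfT[1/y] \simeq \bft[1/y]$, each of which receives no nonzero map from $\bft/\bfx$ since $y$ acts nilpotently there. This needs only that $\fn$ is finitely generated, not the polynomial or hypersurface hypothesis your route depends on.
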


\begin{proof}
For brevity, let $\bft=tmf(\Gamma)$, and $\bfT=Tmf(\Gamma)$; we show that $\ell: \bft \lra \bfT$ has the  universal property that $\bft \to \Cn \bft$ enjoys.

First, note that if $x \in \fn$, then $\ell$ induces
$$\bfT[1/x]\simeq \bft[1/x], $$
since the fibre of $\ell$ is bounded
above. 

Let $\bft/\bfx$ be the unstable Koszul complex for some set $\bfx$ of radical generators of $\fn$. It remains to show that 
$$[\bft/\bfx, \bfT]_*^{\bft}=0.$$
For this we note that $\fMb=\fMb_{ell} (\Gamma)$ has a finite open cover by substacks
$\fMb [1/y]$ for $y\in \fn$, and the intersections of these are of the
same form. (For example, we can pull back the cover of $\fMb_{ell}$ by the opens where the modular forms $c_4$ and $\Delta^{24}$ are respectively invertible.) Furthermore, 
$$\bfT[1/y]\simeq \Gamma (\fMb [1/y]; \cO^{top}). $$
By our assumptions,
$$H^0(\fMb [1/y]; \omega^{\tensor
  */2})=\bft_*[1/y]. $$
Since $y$ acts nilpotently on $\bft/\bfx$, we see that 
$$[\bft/\bfx, \bfT[1/y]]_*^{\bft}=0.$$
Since $\fMb$ has a cover whose sets and intersections are all of the
form $\fMb[1/y]$ it follows that $\bfT$ is built from the
spectra $\bfT[1/y]$, and hence
$$[\bft/\bfx,
\bfT ]_*^{\bft}=0$$ 
as required.
\end{proof}

\subsection{Tabulation of examples}
\label{subsec:egtab}
It is helpful to tabulate  a range of examples we can deal with by
these elementary means (i.e. where the coefficient ring is polynomial
or  a complete interesection). 

The first entry in the row is the common name for the ring spectrum,
either topological modular  forms with a level $\Gamma$ structure
$tmf(\Gamma)$ (general reference \cite{HLLevel}) 
or a particular ring of topological automorphic forms with additional
data (general reference \cite{BL}). Each row is a $p$-local or $p$-complete
statement, where $p$ is the second entry.   
The third column gives a finite group of automorphisms of $\rc$. The
homotopy fixed point spectrum will usually have much more complicated
homotopy groups, which may be studied by descent. The degrees of generators are self-explanatory and $a $ is the
 Gorenstein shift.

\newcommand{\tafvi}{taf_{\delta 6}}
\newcommand{\tafviALa}{taf_{\delta 6}^{AL\alpha}}
\newcommand{\tafviALb}{taf_{\delta 6}^{AL\beta}}
\newcommand{\tafviALab}{taf_{\delta 6}^{AL\alpha\beta}}

\newcommand{\Tafvi}{Taf_{\delta 6}}
\newcommand{\TafviALa}{Taf_{\delta 6}^{AL\alpha}}
\newcommand{\TafviALb}{Taf_{\delta 6}^{AL\beta}}
\newcommand{\TafviALab}{Taf_{\delta 6}^{AL\alpha\beta}}

\newcommand{\tafxiv}{taf_{\delta 14}}
\newcommand{\Tafxiv}{Taf_{\delta 14}}

\newcommand{\tafx}{taf_{\delta 10}}
\newcommand{\Tafx}{Taf_{\delta 10}}
\newcommand{\tafxsqrt}{taf_{\delta 10, \sqrt{2}}}
\newcommand{\Tafxsqrt}{Taf_{\delta 10, \sqrt{2}}}

\newcommand{\tafxv}{taf_{\delta 15}}
\newcommand{\Tafxv}{Taf_{\delta 15}}

\medskip
\begin{tabular}{lcc|rrr|r||r|}
$\rc$&$p$&&$\deg (x)$&$\deg (y)$&$\deg (z)$&$\deg (f)$&$a$\\
\hline
$tmf(3)$ &$2$&$BT_{48}$&$2$&$2$&-&-&$-6$\\
$tmf_1(3)$&$2$&$C_2$&$2$&$6$&-&-&$-10$\\
$tmf(2)$&$3$&$\Sigma_3$&$4$&$4$&-&-&$-10$\\
$tmf_0(2)$&$3$&&$4$&$8$&-&-&$-14$\\
\hline
$\tafvi$& 5&two $C_2$  &$8$ &$12$& $24$&$48$&$2$\\
$\tafviALa$&5  & &$8$ &$24$& $24$&$48$&$-10$\\
$\tafviALb$& 5  &&$8$ &$12$& - &- &$-22$\\
$\tafvi$& $ \pm 1$ mod 24&$C_2\times C_2$&  $8$ &$12$& $24$&$48$&$2$\\
$\tafviALab$& $\pm 1$ mod 24 && $16$ &$24$& $44$&$88$&$2$\\
\hline
$\tafxiv$& $3$&&$4$ &$16$& - &- &$-22$\\
\hline
$\tafxsqrt$& $3$&$C_3$&$4$ &$4$& $12$ &$24$ &$2$\\
\hline
$\tafxv$& $2$&$C_8\times C_2$ &$2$ &$6$& $12$ &$24$ &$2$\\
\hline
\end{tabular}

\vspace{2ex}

Although the general features are covered above, we make four cases
explicit. The details of the first set of examples (topological modular forms) can be found in a number of sources, including \cite{MahowaldRezk, Stojanoska2, Stojanoska3}. The second set (topological automorphic forms of discriminant 6), which we summarize below, is based on the work of Hill-Lawson \cite{HLShimura}, as are the next two, topological automorphic forms of discriminants 14 and 10. The last charted example, topological automorphic forms of discriminant 15, is the subject of Lawson's paper \cite{LawsonD15}.

\begin{example}
We consider the spectrum $\rc=\tafvi$ which is the connective version of
the spectrum $\Tafvi =\Cn \rc$ of topological automorphic forms of discriminant
6 \cite[Section 3]{HLShimura}. Note that $a=2$ in this case, but as is done in \cite{HLShimura}, one can still construct a good connective spectrum $\rc$ such that the analogue of Lemma
\ref{lem:Cn} holds. 

The coefficients are 
$$\rc_*=(\tafvi)_*=K[x,y,z]/(f), \mbox{ where } f=3x^6 + y^4 + 3 z^2,  $$
with  
$$K=\Z [1/6], |x|=8, |y|=12, |z|=24 \mbox{ and } |f|=48.$$
Thus $\rc_*$ is a relative complete intersection, and $\rc_*$ is
Gorenstein. Indeed, it is easy to calculate local cohomology, directly or by local duality to see
$$H^*_{\fn}(\rc_*)= H^2_{\fn}(\rc_*)=\Sigma^a \rc_*^{\vee}$$
where $\rc_*^{\vee}=\Hom_{K}(\rc_*, K)$ and $a=48-(8+12+24)-2=2.$
Since this is in a single cohomological degree we have
$$\pi_*(\Gn \rc)=\Sigma^2\rc_*^{\vee},$$
and then the cofibre sequence
$$\Gn \rc \lra \rc \lra \Cn \rc$$
gives
$$\pi_*(\Cn \rc)=\rc_*\oplus \Sigma^3 \rc_*^{\vee},  $$
where the splitting follows by degree and parity. 
\end{example}

\begin{example}
Considering the completion at $p=5$, there are two distinct lifts of the Atkin-Lehner involution $w_6$ on
$\tafvi$, as in \cite[Example 3.12]{HLShimura}; for brevity, we call them $\alpha$ and $\beta$.

(1) The $\alpha$-involution negates $y$, so that $Y=y^2$ is
invariant; $x$ and $z$ are fixed. We take 
$$\rc:=\tafviALa:=(\tafvi)^{hC_2,\alpha}.$$ 
Since 2 is invertible,  the invariants give the homotopy
$$\rc_*=(\tafviALa)_* = K [x,Y,z]/(f) \mbox{ where } f=3x^6+Y^2+3z^2,$$
$$K=\Z_5^{\wedge}, |x|=8, |Y|=24, |z|=24 \mbox{ and } |f|=48.$$
This ring is Gorenstein of shift $a=48-(8+24+24)-2=-10$.

(2) The $\beta$-involution negates $z$, leaving $x$ and $y$ fixed.
We take 
$$\rc:=\tafviALb:=(\tafvi)^{hC_2,\beta}.$$ 
Again, since 2 is invertible,  the invariants give the homotopy
$$\rc_*=(\tafviALb)_* = K [x,y],$$
where
$$K=\Z_5^{\wedge}, |x|=8 \mbox{ and } |y|=12. $$
The ring $\rc_*$, and hence also $\rc$ is Gorenstein of shift $a=-(8+12)-2=-22$.

The difference in shifts in (1) and (2) illustrates that the change in
shift on descent depends on the action. 
\end{example}

\begin{example}
Completing at a prime $p\equiv \pm 1$ mod 24, there are two commuting Atkin-Lehner involutions on
$\tafvi$ and we may take $C_2\times C_2$ invariants, as in \cite[Example 3.11]{HLShimura}. We find that
$X=x^2$, $Y=y^2$ and $T=xyz$ are invariant.  
We take 
$$\rc:=\tafviALab:=(\tafvi)^{h(C_2\times C_2)}.$$ 
Again, since 2 is invertible,  the  invariants give the homotopy
$$\rc_*=(\tafviALab)_* = K [X,Y,T]/(g) \mbox{ where } g=x^2y^2f=3X^4Y+XY^3+3T^2,$$
$$K=\Z_p^{\wedge}, |X|=16, |Y|=24, |T|=44 \mbox{ and } |g|=88.$$
Thus $\rc_*$ and hence also $\rc$ itself is Gorenstein of shift $a=88-(16+24+44)-2=2$.
\end{example}

\section{Invariant theory and descent}
\label{sec:ratdescent}
We imagine given a connective ring spectrum $\rc$ and $\Rc=\Cn \rc$,
and  that a finite group $G$ acts on $\rc$ and hence on $\Rc$. In fact
we suppose that $\rc$ and $\Rc$ are $G$-spectra, but we will only make
use of the naive action. 
In the examples we know, $\Rc^G\lra \Rc$ is a Galois extension, so that in
particular $\Rc^G\simeq \Rc^{hG}$, and $\rc^G$ is the connective cover
of $\Rc^G$.

We assume that it has been proved that the ring spectrum $\rc$ has Gorenstein
duality and $\Rc$ is Anderson self-dual, and we are interested in
proving the good properties descend to  $\rc^G$ and $\Rc^G$.
 It is well known in algebra that the Gorenstein property need not descend to
rings of invariants, and that when it does, there will usually be a
change (the Solomon Supplement) in the Gorenstein shift. 

Since rationalization commutes with taking invariants, one can learn
about the general question of descent by considering the rational
case, which is essentially algebraic. In particular, if $\rc^G$ is
Gorenstein with Solomon Supplement $b$ then this will also be true 
rationally, so we obtain a necessary condition for $\rc^G$ to be
Gorenstein and a prediction of its Gorenstein shift.

The purpose of this section is to describe what happens in the
algebraic case, and to note that the Solomon Supplement is predicted
from Solomon's Theorem in invariant theory, and can be calculated from the character of
the action of $G$ on the polynomial generators.

Since the \Cech\ and  homotopy fixed point constructions commute with
localization we assume for the remainder of this section that $\rc_*$ is
rational. 

We thank D.J.Benson for his illustrated tutorials and 
we recommend \cite{Benson} for an account of the relevant
invariant theory.

 \subsection{Context}
In invariant theory, the best understood case is that of a polynomial
ring, so let us assume $\rc_*$ is a polynomial ring 
$$\rc_*=K[x_1,\ldots , x_r],$$
in $r>0$ variables, where $x_i$ is of even degree $d_i>0$ and $K$ is a $\Q$-algebra. 
Of course  $\rc_*$ (and hence $\rc$ itself) is Gorenstein of shift 
$$  a=-(d_1+\cdots+d_r)-r . $$
Since  the degrees $d_i$ are positive,  $a\leq -2$,  and we have (additively) 
$$  \Cn (\rc)_*=\rc_* \oplus \Sigma^{a+1} \rc_*^{\vee}$$
giving an Anderson self-duality shift of $a+1$. 

It is convenient to collect together the polynomial generators and say
$$\rc_*=K[V]$$
where $V=Q(\rc_*)$ is the indecomposable quotient, a graded free
$K$-module. 

\subsection{Invariant theory}
Now suppose $G$ acts on $\rc$ in such a way that $\rc_*$ is the symmetric algebra on a $K$-representation $V$ of $G$. 
If we now assume that $G$ acts by pseudoreflections, the
Shephard-Todd theorem  \cite[7.2.1]{Benson} states that $\rc_*^G$ is also  a polynomial ring 
$$\rc_*^G=K[f_1, \ldots , f_r]$$
where $f_i$ is of degree $e_i$.  Watanabe's Theorem
\cite[4.6.2]{Benson} follows easily from
 the local cohomology of polynomial rings
$$H^r_{\fn}(\rc_*)=\Sigma^{-(d_1+\cdots +d_r)}\rc_*^{\vee} \tensor \det,$$
where $det$ is the determinant of $V$. By Solomon's  Theorem \cite[7.3.1]{Benson}, we have
$$  \Sigma^b (\rc_*\tensor \det)^G =\rc_*^G, $$
where the Solomon Supplement  is $b=(d_1+\cdots +d_r)-(e_1+\cdots +e_r)$. Hence we find 

\begin{itemize}
\item $ \rc_*^G$  (and hence $\rc^{G}$) is Gorenstein of shift $a+b $
\item $ \Cn (\rc)_*^G$  (and hence $\Cn (\rc)^{hG}$)  has Anderson self-duality of shift $a+b+1$
\end{itemize}

Without any restriction on the action, the Hilbert series of the ring
of invariants  may be calculated by character theory from Molien's
Theorem \cite[2.5.2]{Benson}:
$$p(K[V]^G,t)=\frac{1}{|G|}\sum_{g\in G}\frac{1}{\det (1-g^{-1}t,
  V)}. $$

This gives an alternative method for finding the degrees of generators
if the invariants are polynomial.  A more direct route to finding the Gorenstein shift is directly from
the Hilbert series: Stanley's Theorem \cite[Theorem 5.5]{Stanley} shows that if $A$ is a Gorenstein graded ring of shift
$a$, free over $K$
of  Krull dimension $r$, the shift can be deduced from the functional equation 
$$p(A, 1/t)=(-1)^r t^{r-a}p(t), $$
where $r$ is the Krull dimension.

\subsection{Examples}
We make explicit two well-known and rather simple examples. 

\begin{example} ($\rc=ku$, $K=\Z$, $G=C_2$). 
We have $\rc_*=\Z [v]$ with $v$ of degree $2$. This is polynomial, and
hence Gorenstein of shift $a=-3$, and it follows that $ku\lra H\Z$ is
Gorenstein of shift $-3$.  In this case $\fn =(v)$ so that  $\Cn ku\simeq ku[1/v]=KU$.  We can then immediately deduce from
Proposition \ref{prop:GorDAndD} that $KU$ is Anderson self-dual of shift $-2$. 

Now consider connective real $K$-theory  $ko$ with the more
complicated coefficient ring $ko_*=\Z [\eta, \alpha,
\beta]/(2\eta, \eta^3, \alpha^2=4\beta)$ where $|\eta|=1, |\alpha|=4,
|\beta|=8$. To show it is Gorenstein,  we can use the fact that by Wood's theorem
$\Hom_{ko}(ku, ko)\simeq \Sigma^{-2}ku$ and therefore $ko$ is
Gorenstein of shift $-3-2=-5$. Alternatively we can use the fact that
$KO\lra KU$ is Galois and  deduce that $KO=KU^{hC_2}$ is
Gorenstein by descent. One can do this integrally by looking at the
descent spectral sequence, but we will not give details here. (The interested reader could consult \cite{HS}.) From
this we infer that $KO$ is Anderson self-dual of
shift $-4$. The fact that $ko$ has Gorenstein duality of shift $-5$
then follows by Lemma \ref{lem:AndDGorD}. 

For the present we will be satisfied to observe the rational result,
which in particular tells us that the Solomon Supplement  is $-2$.

The action is that  $C_2$ acts to negate $v$ so that $V $ is the sign representation $\tilde{\Z}$. 
We have that
$$H^*_{\fn}(ku_*)=H^1_{\fn}(ku_*)=\Z [v,v^{-1}]/\Z [v]= \Sigma^{-2} v^{-1}\cdot \Z [v]^{\vee},$$ 
and we see $(\rc_*\tensor \det)^G\cong \Sigma^{2}\rc_*^G$, so that $b=-2$.

Rationalisation gives $\rc_*^G=\Q[v]^G=\Q [v^2]$. By inspection this is Gorenstein
of shift $-5$, and we see this is also $a+b$ as predicted above. 
\end{example}

\begin{example} ($\rc=tmf(2)$, $K=\Z_{(3)}$ and  $G=GL_2(3)\cong \Sigma_3$). 
We have $\rc_*=K [x, y]$ with $x $ and $y$ of degree $4$. This is polynomial, and
hence Gorenstein of shift $a=-10$. 

The action is that   $V_K=\ker (K\{ \underline{3}\} \lra K)$, where $K\{ \underline{3}\}$ is the permutation representation associated to
$\underline{3}=\{1,2, 3\}$ with the standard action of $\Sigma_3$ (we
will write $[1], [2], [3]$ for the standard basis). 
Of course 
\begin{align*}
H^*_{\fn}(\rc_*)&=H^2_{\fn}(K[x,y])=K [x,x^{-1}, y,y^{-1}]/(K [x,x^{-1}, y]+K [x, y,y^{-1}])\\
&= \Sigma^{-8} (K[x,y]\tensor \det)^{\vee}.
\end{align*}

We now rationalize to apply the above theory. There are three simple
rational representations, $\eps,
\det$ and $V$, of dimensions 1, 1, and 2, where $V=\ker (\Q \{\underline{3}\} \lra \Q)\cong
V_{K}\tensor \Q$. It is routine to
calculate the decomposition of the symmetric powers into these simple
representations of $\Sigma_3$. Writing $(ijk)$ for $i\eps\oplus j\det\oplus kV$,
the decompositions of the first six symmetric powers of $V_K$ (in degrees 0, 4, 8, 12, 16, 20) are 
$(100), (001), (101), (111), (102), (112)$. The rest follow by the
fact that if the part in degree $4d$ decomposes as $(ijk)$
the part in degree $4d+24$ decomposes as $((i+1)(j+1)(k+1))$. 

The dimension  of the invariants is just the number of copies of
$\eps$ which is thus 
$$101111212222323333\ldots .$$ 
If we take $x=[1]-[2]$
and  $y=[2]-[3]$ it  is easy to find the 
invariants $A=x^2+xy+y^2=N(-xy)$ of degree 8 and 
$B=x^3-y^3-3xy(x+y)/2$ of degree 12, giving
$\rc_*^{\Sigma_3}=K[A,B]$. This (and hence $\rc^{hG}$ rationally) is
Gorenstein of  shift $-22$, and $\Cn  (\rc)^{hG}$ is rationally Anderson
self-dual of shift $-21$. 

On the other hand Solomon's theorem shows that $(\rc_*\tensor
\det)^{\Sigma_3} =\Sigma^{12}\rc_*^{\Sigma_3}$. We note that 
$-b=12=(12-4)+(8-4)$ as expected.
\end{example}

\bibliographystyle{amsalpha}
\bibliography{biblio}
\end{document}